\documentclass[a4paper,12pt]{article}
\usepackage[utf8]{inputenc}
\usepackage[english]{babel}
\usepackage{enumitem}
\usepackage{color}
\usepackage{amsmath}
\usepackage{amssymb}
\usepackage{amsthm}
\usepackage{amsfonts}
\usepackage{epsfig}
\usepackage{subcaption}
\usepackage{caption}
\usepackage{graphicx}
\usepackage{epstopdf}
\usepackage{multirow}
\usepackage{upgreek}
\usepackage{placeins}
\usepackage{mathrsfs}
\usepackage{float}%
\usepackage{booktabs}
\usepackage{multirow}
\usepackage{adjustbox}
\usepackage{subcaption}
\usepackage{pifont}

\usepackage{caption}
\newtheorem{dfn}{Definition}[section]
\newtheorem{lem}{Lemma}[section]

\newtheorem{alg}{Algorithm}[section]
\newtheorem{theorem}{Theorem}[section]
\newtheorem{example}{Example}[section]

\usepackage{makecell}
\theoremstyle{definition}
\newtheorem{asm}{Assumption}[section]

\newtheorem{rem}{Remark}[section]

\newtheorem{fact}{Fact}[section]
\usepackage{tabularx}  
\usepackage{booktabs}

\DeclareMathOperator{\diag}{diag}
\DeclareMathOperator*{\zer}{zer}
\DeclareMathOperator*{\Fix}{Fix}
\DeclareMathOperator*{\gra}{gra}

\DeclareMathOperator*{\Id}{Id}
\DeclareMathOperator*{\range}{range}

\DeclareMathOperator*{\dom}{dom}

\DeclareMathOperator*{\slt}{slt}
\allowdisplaybreaks
\begin{document}
	\title{\textbf{A reflected forward-backward splitting algorithmic framework{\thanks{Supported by Scientific Research Project of Tianjin Municipal Education Commission (2020ZD02)}}}}
	\author{ \sc \normalsize Haowen Zheng$^a${\thanks{ email: zhw20251122@163.com}},\,\,
		Yongyu Fu$^b${\thanks{ email: fyy000611@163.com}},\,\,
		Qiao-Li Dong$^b${\thanks{Corresponding author. email: dongql@lsec.cc.ac.cn}}\,\,\,\,and\,\,
		Shuangbao Li$^c${\thanks{email: shuangbaoli@yeah.net}}\\
		\small 	$^a$ School of Light Industry and Engineering, South China University of Technology,\\ 	\small  Guangzhou 510640, China,\\
		\small $^b$College of Science and
		\small $^c$Research Institute of Science and Technology,\\
		\small Civil Aviation University of China, Tianjin 300300, China,
	}
	\date{}
	\maketitle
	
	\begin{abstract}
		{In this paper, we propose a reflected forward-backward splitting algorithmic framework for finding  a zero of the sum of finitely many monotone operators, including maximally monotone operators, cocoercive operators, and monotone and Lipschitz continuous operators.  We provide a unified convergence analysis under mild conditions, eliminating the need to analyze the convergence of each algorithm individually. The heuristic strategies for matrix selections are proposed through a numerical experiment, based on which a new algorithm is derived.
			A further numerical experiment on the regularized saddle-point problem is then presented to demonstrate the effectiveness of the proposed algorithm.
		}
	\end{abstract}
	
	\noindent{\bf Keywords:} Monotone inclusion;  Resolvent splitting;  Forward-backward algorithm; Reflected term.
	
	\section{Introduction}
	In this paper, we focus on the structured inclusion problem in a real Hilbert space $\mathcal{H}$, which is to find $x\in \mathcal{H}$ such that
	\begin{equation}
		\label{ABC}
		0  \in  \sum_{i=1}^{n}A_i(x)+\sum_{i=1}^{m}B_i(x) +\sum_{i=1}^{l}C_i(x),
	\end{equation}
	where $A_i: \mathcal{H} \rightrightarrows \mathcal{H}$ are  maximally monotone, $i=1,\dots,n$,  $B_i: \mathcal{H}\to \mathcal{H} $ are cocoercive,  $i=1,\dots,m$ and $C_i: \mathcal{H}\to \mathcal{H} $ are monotone and Lipschitz continuous, $i=1,\dots,l$.
	The inclusion problem \eqref{ABC} provides a unified framework for modeling a wide class of structured optimization problems. In the following, we present two such examples.
	
	\begin{example}\rm \textbf{(Structured saddle-point problem)}
		\rm
		We consider a structured saddle-point problem:
		\begin{equation}\label{saddle}
			\min_{u\in\mathcal{H}_1} \max_{v\in\mathcal{H}_2}\sum_{i=1}^{n} (g_{i1}(u)-h_{i1}(v))+\sum_{i=1}^{l} \Psi _i(u,v)+\sum_{i=1}^{m}(g_{i2}(u)-h_{i2}(v)),
		\end{equation}
		where $g_{i1}, i=1,\dots,n :\mathcal{H}_1\to (-\infty ,+\infty ]$, $ h_{i1}, i=1,\dots,n :\mathcal{H}_2\to (-\infty ,+\infty ]$ are proper, lower semicontinuous (lsc)  and convex,  $g_{i2}, i=1,\dots,m :\mathcal{H}_2\to (-\infty ,+\infty )$, $h_{i2}, i=1,\dots,m :\mathcal{H}_1\to (-\infty ,+\infty )$ are convex and differentiable with Lipschitz continuous gradients, and $\Psi_i,i=1,\dots,l: \mathcal{H}_1\times\mathcal{H}_2\to (-\infty ,+\infty ]$ are differentiable convex-concave functions with Lipschitz continuous gradients. Assuming that a saddle-point exists, \eqref{saddle} can be posed as \eqref{ABC} in the space $\mathcal{H}=\mathcal{H}_1\times\mathcal{H}_2$ with
		$$A_i(u,v)=\begin{pmatrix}\partial g_{i1}(u)
			\\
			\partial h_{i1}(v)
		\end{pmatrix},\ B_i(u,v)=\begin{pmatrix}\nabla  g_{i2}(u)
			\\
			\nabla h_{i2}(v)
		\end{pmatrix}\ \hbox{and}\ C_i(u,v)=\begin{pmatrix}\nabla_u  \Psi_{i}(u,v)
			\\
			-\nabla_v  \Psi_{i}(u,v)
		\end{pmatrix}.
		$$
		Note that $A_i,i=1,\ldots,n$ are maximally monotone (\cite{Rockafellar1970}, \cite[Proposition 23.18]{BC2011}),   $B_i,i=1,\ldots,m$ are cocoercive  (see, e.g. \cite[Theorem 18.15]{BC2011}), and  $C_i,i=1,\ldots,l$ are monotone due to \cite[Theorem 2] {saddlemin}, and Lipschitz continuous, but generally not cocoercive.
		In fact, the problems \eqref{saddle} considered by many researchers can be viewed as special cases of \eqref{saddle} \cite{primal-2026}.
	\end{example}
	
	\begin{example}\rm \textbf{(Structured variational inequality problem)}
		\rm
		Consider a structured variational inequality problem:
		\begin{equation}\label{4}
			\begin{aligned}
				\text{find}~x^*\in\mathcal{H}~\text{such that}\sum_{i=1}^{n}f_i(x)&-\sum_{i=1}^{n}f_i(x^*)+\sum_{i=1}^{m}\langle B_i(x^*),x-x^*\rangle\\
				&+\sum_{i=1}^{l}\langle C_i(x^*),x-x^*\rangle\geq 0,\quad \forall x\in\mathcal{H},
			\end{aligned}
		\end{equation}
		where $f_1,...,f_n:\mathcal{H}\to(-\infty,+\infty]$ are proper, lsc and convex,  $B_i,i=1,\ldots,m:\mathcal{H}\to\mathcal{H}$ are cocoercive and $C_i,i=1,\ldots,l:\mathcal{H}\to\mathcal{H}$ are monotone and Lipschitz continuous. By standard arguments in convex analysis, the above variational inequality is equivalent to the inclusion problem (\ref{ABC}), where the operators $A_i=\partial f_i$
		are maximally monotone. Let $D_1, \dots, D_n \subseteq \mathcal{H}$ be nonempty, closed and convex sets and $f_i=\iota_{D_i}$ be the indicator of $D_i$, $i=1,\ldots,n$. Then the problem \eqref{4} reduces a typical representative example of the variational inequality problem, with its specific formulation stated below:
		\begin{equation*}\label{ques}
			\begin{aligned}
				\text{find}~x^*\in D~\text{such that}\sum_{i=1}^{m}\langle B_i(x^*),x-x^*\rangle+\sum_{i=1}^{l}\langle C_i(x^*),x-x^*\rangle\geq 0,~ \forall x\in D,
			\end{aligned}
		\end{equation*}
		where $D:=\bigcap_{i=1}^{n} D_i.$  In this way, the set $D$ can be handled through its simpler component sets
		$D_1,\ldots,D_n$.  The problems \eqref{ques} considered by many researchers can be viewed as special cases of \eqref{saddle} \cite{Alac2021}.
	\end{example}

	When $n=1$, $m=l=0$, the problem \eqref{ABC} reduces to a single maximally monotone inclusion, which can be solved by the proximal point algorithm \cite{proxi}.
	If the number of maximally monotone operators increases to two, i.e., $n=2$, $m=l=0$, the problem \eqref{ABC} becomes that of finding a zero of the sum of two maximally monotone operators.
	A classical approach for this problem is the Douglas--Rachford splitting method \cite{DR}, and it can also be interpreted as a special case of the proximal point algorithm \cite{DRPR}.
	For the case of three maximally monotone operators, i.e., $n=3$, $m=l=0$, Ryu \cite{Ryu} proposed a resolvent splitting method.
	This line of research was further extended by Malitsky and Tam \cite{MKT}, who introduced a resolvent splitting method with minimal lifting for the sum of $n$ maximally monotone operators. Tam \cite{Tam} further proposed a general framework for frugal and decentralised resolvent splittings based on nonexpansive operators. At almost the same time, Bredies et al. \cite{Bredies-Graph2024} introduced another framework which is graph-based extensions of the Douglas--Rachford
	splitting.
	
	When $m\neq0$ or $l\neq0$, some splitting methods incorporate forward steps, which allow certain operators to be evaluated explicitly.
	For instance, when $n=m=1$, $l=0$, the inclusion consists of the sum of a maximally monotone operator and a cocoercive operator, which can be solved by the forward-backward algorithm \cite{n=m=1}.
	For the case $n=l=1$, $m=0$, Tseng developed the forward-backward-forward method \cite{Tseng}.
	
	Motivated by the above fundamental cases, various splitting schemes have been developed for more general multi-operator settings.
	For the case $m=n-1$, $l=0$, Arag\'{o}n--Artacho et al. \cite{ring} introduced a distributed forward-backward scheme, while Bredies et al. \cite{Bred}  proposed both parallel and sequential extensions of the Davis--Yin method based on the preconditioned proximal framework and the product-space reformulation \cite{Pierra}.
	Inspired by \cite{Bredies-Graph2024}, Arag\'{o}n--Artacho et al. \cite{fran} developed a graph-based algorithmic framework.
	More recently, \AA kerman et al. \cite{Anton} further proposed an averaged frugal splitting framework relying on the individual cocoercivity constants of the operators $B_i$ rather than a global one, which allows the number $m$ of cocoercive operators to be not necessarily equal to $n-1$.  At the same time, Dao et al. \cite{minh} also provided a unified framework  from a different perspective.
	
	Another important direction is to incorporate the forward-backward splitting methods with reflection terms in order to handle the case $n\geq3$, $l=n-2$, $m=0$. Arag\'{o}n--Artacho et al. \cite{ring} proposed distributed forward-reflected-backward methods.
	Related reflected Davis--Yin type methods were further generalized in \cite{DYR}.
	Dao et al. \cite{minh} firstly developed a unified framework with the reflected terms for the case $n\geq3$, $m=0$.
	
	These developments naturally motivate the study of more general frameworks that can simultaneously incorporate maximally monotone, cocoercive, and monotone Lipschitz continuous operators.
	For the problem \eqref{ABC} with $n\geq3$, $m=n-1$, $l=n-2$, \cite{FBWR} proposed three forward-backward splitting methods with reflection terms.
	However,  their convergence analyses were carried out separately. Furthermore, the stepsizes depend on the global cocoercivity constants and global Lipschitz constants, which is generally conservative.
	
	To provide a systematic perspective and a unified convergence analysis,  we propose a general algorithmic framework for solving the problem \eqref{ABC}  by extending the framework
	and techniques from \cite{Anton}.
	The proposed framework can recover the aforementioned methods  and design new methods  through suitable matrix selections. It generalizes the problem setting considered in \cite{FBWR}, where $n\geq3$, $m=n-1$, $l=n-2$, to the general form of the problem \eqref{ABC}.
	Moreover, our framework relies on the individual cocoercivity constants of the operators $B_i$ and the individual Lipschitz constants of the operators $C_i$, rather than on global ones.
	\vskip 1mm
	
	Our main contributions are as follows.
	\begin{itemize}
		\item[{\rm(a)}] The proposed framework provides a unified perspective for the design and convergence analysis of a class of splitting algorithms. It also recovers a variety of existing well-known algorithms as special cases through suitable choices of the underlying matrices.
		
		\item[{\rm(b)}] The proposed framework can utilize the Lipschitz constant of each monotone and Lipschitz continuous operator as well as the cocoercive constant of each cocoercive operator, instead of relying on the global Lipschitz constant or the global cocoercive constant and significantly enlarges the admissible range of stepsizes.
		
		\item[{\rm(c)}] We further develop a heuristic matrix-selection strategy inspired by \cite{minh}, which aims to make more effective use of the admissible stepsize range. Based on this strategy, a concrete algorithm is derived from the proposed framework. The numerical results illustrate the superior performance of the proposed algorithm by comparing with existing algorithms.
	\end{itemize}
	
	The structure of the paper is as follows. In Section \ref{Sect2}, we introduce the notations
	and main concepts. In Section \ref{Sect3}, we propose a forward-backward
	splitting algorithmic framework with reflection terms. Section \ref{Sect4} shows the weak convergence of
	our framework. In Section \ref{Sect5}, we present a numerical experiment.
	
	\section{Preliminaries}
	\label{Sect2}
	
	Let $\mathbb{N}=\{0,1,\dots\}$ be the set of natural numbers and $\mathbb{N}_+=\{1,2,\dots\}$ be the set of non-zero natural numbers. Throughout this paper,  we denote by $\mathcal{H}$ a real Hilbert space with inner product  $\langle \cdot, \cdot\rangle$ and induced norm $\|\cdot\|=\sqrt{\langle \cdot, \cdot\rangle}$.  Denote by $\Id$  the identity operator on $\mathcal{H}$. 
	Given a linear, self-adjoint, and strongly  positive  operator $V: \mathcal{H}\to \mathcal{H}$, we define $\langle x,y  \rangle_V =\langle x,Vy  \rangle $  and  $ \| x  \|_V=\sqrt{ \langle x,Vx  \rangle }  $ for $\forall x,y \in\mathcal{H}$.     We use $\omega_w(x^k) = \{x : \exists x^{k_j}\rightharpoonup x\}$ to denote the weak $\omega$-limit set of the sequence $\{x^k\}$.
	
	For a matrix $P\in\mathbb{R}^{n\times m}$, we denote by $P_{ij}$ its $(i,j)$ component. The transpose of matrix $P$ is  denoted as  $P^{\top}$.  When $m=n,$ we denote by $\slt(P)\in\mathbb{R}^{n\times n}$ the strictly  lower triangular matrix extracted from $P$, and  by   $\diag(P)\in\mathbb{R}^n$ the vector   extracted  from the main diagonal of $P$.  With a mild overload of notation, for $w\in\mathbb{R}^n$, we denote by $\diag(w)\in\mathbb{R}^{n\times n}$ the diagonal matrix with the diagonal being $w$. Given a vector $g = (g_1,\dots, g_n)$ with all non-zero elements, the Hadamard inverse of $g$, denoted by $g^{\odot (-1)}$, is the vector defined element-wise by $(g^{\odot (-1)})_i = \frac{1}{g_i} $ for  $i=1,\dots,n$.
	Given a matrix $P\in \mathbb{R} ^{n\times m}$,  we denote the Kronecker  product of $P$ and $\Id$ by
	$$
	\mathbf{P} =P\otimes \Id=\begin{bmatrix}
		P_{11}\Id&  P_{12}\Id & \cdots   & P_{1m}\Id\\
		P_{21}\Id & P_{22}\Id  & \cdots &  P_{2m}\Id\\
		\vdots  & \vdots  &\ddots   &\vdots  \\
		P_{n1}\Id &  P_{n2}\Id &\cdots   & P_{nm}\Id
	\end{bmatrix}.
	$$
	Note that $\mathbf{P}$ is a bounded linear operator from $\mathcal{H}^m$ to $\mathcal{H}^n$.

	\begin{fact}
		For all $a, b, c, d\in\mathcal{H}$, there holds	
		\begin{equation}
			\label{ceq}
			2\left \langle a-b,c-d \right \rangle =\left \| a-d \right \| ^2+\left \| b-c \right \|^2-\left \| a-c \right \|^2-\left \| b-d \right \|  ^2.
		\end{equation}
	\end{fact}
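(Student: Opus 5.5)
The plan is to expand both sides using bilinearity and symmetry of the inner product, and compare the resulting terms. For any $x,y\in\mathcal{H}$ we have $\|x-y\|^2=\|x\|^2-2\langle x,y\rangle+\|y\|^2$. I will apply this identity to each of the four squared norms on the right-hand side of \eqref{ceq} and then sum with the indicated signs.

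The expansions are
\begin{align*}
\|a-d\|^2&=\|a\|^2-2\langle a,d\rangle+\|d\|^2,\\
\|b-c\|^2&=\|b\|^2-2\langle b,c\rangle+\|c\|^2,\\
\|a-c\|^2&=\|a\|^2-2\langle a,c\rangle+\|c\|^2,\\
\|b-d\|^2&=\|b\|^2-2\langle b,d\rangle+\|d\|^2.
\end{align*}
Adding the first two and subtracting the last two, the pure norm terms $\|a\|^2,\|b\|^2,\|c\|^2,\|d\|^2$ each appear once with a plus sign and once with a minus sign, so they cancel. What remains is
\[
2\langle a,c\rangle+2\langle b,d\rangle-2\langle a,d\rangle-2\langle b,c\rangle .
\]

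On the left-hand side, bilinearity gives
\[
2\langle a-b,c-d\rangle=2\langle a,c\rangle-2\langle a,d\rangle-2\langle b,c\rangle+2\langle b,d\rangle,
\]
which is exactly the remaining expression above, so the identity holds.

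There is no real obstacle here. The only care needed is sign bookkeeping: each of $\langle a,d\rangle$ and $\langle b,c\rangle$ comes from a positively signed squared norm, and each of $\langle a,c\rangle$ and $\langle b,d\rangle$ from a negatively signed one. The argument uses only that $\langle\cdot,\cdot\rangle$ is a real symmetric bilinear form, so it applies equally to $\langle\cdot,\cdot\rangle_V$ for any linear, self-adjoint, strongly positive $V$.
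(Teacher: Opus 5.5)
Your proof is correct: expanding the four squared norms and the left-hand inner product by bilinearity and symmetry makes the pure norm terms cancel and the cross terms match, and this is the standard verification. The paper states the identity as a Fact without proof, so your argument is exactly the routine computation it implicitly relies on.
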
	
	
	Let   $T: \mathcal{H} \rightarrow {\mathcal{H}}$ be an operator. Denote by $\Fix T$ be the set of fixed points of $T,$ i.e., $\Fix T=\{x\in \mathcal{H} \,:\, x=T(x)\}.$
	\begin{dfn}
		\rm
		An operator  $T: \mathcal{H} \rightarrow {\mathcal{H}}$ is said to be
		\begin{itemize}
			\item[(i)]  $L $-Lipschitz continuous, if there exists a constant $ L > 0$, such that
			$$
			\|T(x)-T(y)\|\leq  L \|x-y\|,\quad\forall x, y\in \mathcal{H},
			$$
			and nonexpansive if $L =1$;
			\item[(ii)] $\sigma$-cocoercive, if there exists a constant $\sigma > 0$, such that
			$$
			\langle T(x)-T(y),x-y \rangle \geq  \sigma\|T(x)-T(y)\|^2,\quad\forall x, y\in \mathcal{H}.
			$$
		\end{itemize}
	\end{dfn}
	\noindent
	By  Cauchy--Schwarz inequality, a $\sigma$-cocoercive operator is $\frac1\sigma$-Lipschitz continuous.
	
	\begin{lem}\label{lemma2.2}
		Let  $C_i: \mathcal{H}\to \mathcal{H} $ be  $L_i$-Lipschitz continuous, $i=1,\dots,l$  and $\mathbf{C}=(C_1,\dots,C_{l})$. Then it holds
		\begin{equation*}\label{CC}		\|\mathbf{C}(\mathbf{x})-\mathbf{C}(\mathbf{y})\|\le\|\mathbf{x}-\mathbf{y}\|_{\mathbf{L^2}},
			\quad\forall \mathbf{x},\mathbf{y}\in \mathcal{H}^{l},
		\end{equation*}
		where $L^2=\diag(L_1^2,\dots,L_l^2)$.
	\end{lem}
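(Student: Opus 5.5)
The proof is a direct componentwise computation in the product space $\mathcal{H}^l$, equipped with the norm $\|\mathbf{x}\|^2=\sum_{i=1}^l\|x_i\|^2$. I read $\mathbf{C}$ as acting diagonally, $\mathbf{C}(\mathbf{x})=(C_1(x_1),\dots,C_l(x_l))$ for $\mathbf{x}=(x_1,\dots,x_l)\in\mathcal{H}^l$, and $\mathbf{L^2}=L^2\otimes\Id$, following the Kronecker convention of Section~\ref{Sect2}. Since $L^2$ is diagonal with positive entries, $\mathbf{L^2}$ is linear, self-adjoint and strongly positive, so $\|\cdot\|_{\mathbf{L^2}}$ is well defined.

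First I expand the squared norms. By the definition of the product norm,
\begin{equation*}
\|\mathbf{C}(\mathbf{x})-\mathbf{C}(\mathbf{y})\|^2=\sum_{i=1}^{l}\|C_i(x_i)-C_i(y_i)\|^2 .
\end{equation*}
On the other side, $\mathbf{L^2}(\mathbf{x}-\mathbf{y})=(L_1^2(x_1-y_1),\dots,L_l^2(x_l-y_l))$, so
\begin{equation*}
\|\mathbf{x}-\mathbf{y}\|_{\mathbf{L^2}}^2=\sum_{i=1}^{l}L_i^2\|x_i-y_i\|^2 .
\end{equation*}

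Next I bound each term by the $L_i$-Lipschitz continuity of $C_i$, which gives $\|C_i(x_i)-C_i(y_i)\|^2\le L_i^2\|x_i-y_i\|^2$. Summing over $i=1,\dots,l$ and taking nonnegative square roots yields the claimed inequality.

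There is no real obstacle. The only point needing care is the identification of $\langle\cdot,\cdot\rangle_{\mathbf{L^2}}$ with the weighted sum above, which follows from the block-diagonal structure of $L^2\otimes\Id$.
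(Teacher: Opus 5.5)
Your proof is correct and matches the paper's argument essentially step for step. You expand $\|\mathbf{C}(\mathbf{x})-\mathbf{C}(\mathbf{y})\|^2$ componentwise, bound each term by $L_i^2\|x_i-y_i\|^2$ using Lipschitz continuity, and identify the sum with $\langle \mathbf{L^2}(\mathbf{x}-\mathbf{y}),\mathbf{x}-\mathbf{y}\rangle=\|\mathbf{x}-\mathbf{y}\|_{\mathbf{L^2}}^2$.
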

	
	\begin{proof} Let $\mathbf{x}=(x_1,\ldots,x_l)$ and $\mathbf{y}=(y_1,\ldots,y_l)$ where $x_i,y_i\in\mathcal{H}$, $i=1,\ldots,l$. Then it follows from the Lipschitz continuous property of $C_i$ that
		\begin{equation*}
			\begin{aligned}
				\|\mathbf{C}(\mathbf{x})-\mathbf{C}(\mathbf{y})\|^2&={\sum_{i=1}^{l}\|C_ix_i-C_iy_i\|^2}\\				
				&\le{\sum_{i=1}^{l}L_i^2\|x_i-y_i\|^2}\\
				&={\sum_{i=1}^{l}\langle L_i^2(x_i-y_i),x_i-y_i\rangle}\\
				&={\langle \boldsymbol{L^2}(\mathbf{x}-\mathbf{y}),\mathbf{x}-\mathbf{y}\rangle}
				={\|\mathbf{x}-\mathbf{y}\|_{\boldsymbol{L^2}}^2}\\
			\end{aligned}
		\end{equation*}	
		The proof is completed.
	\end{proof}
	
	\begin{lem}\label{lemma2.1}
		Let $B_i: \mathcal{H}\to \mathcal{H} $ be $\sigma_i$-cocoercive,  $i=1,\dots,m$ and    $\mathbf{B}=(B_1,\dots,B_{m})$. Then it holds
		\begin{equation*}\label{BB}
			\langle \mathbf{B}(\mathbf{x})-\mathbf{B}(\mathbf{y}),\mathbf{x}-\mathbf{y}\rangle\ge
			\|\mathbf{B}(\mathbf{x})-\mathbf{B}(\mathbf{y})\|^2_{\mathbf{\Sigma}},
			\quad\forall \mathbf{x},\mathbf{y}\in \mathcal{H}^{m},
		\end{equation*}
		where   $\Sigma=\diag(\sigma_1,\dots,\sigma_m)$.
	\end{lem}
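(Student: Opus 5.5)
The plan mirrors the proof of Lemma~\ref{lemma2.2}: the inequality is block-separable, so we write both sides as sums over the components and apply the cocoercivity of each $B_i$ term by term. Write $\mathbf{x}=(x_1,\ldots,x_m)$ and $\mathbf{y}=(y_1,\ldots,y_m)$ with $x_i,y_i\in\mathcal{H}$, so that $\mathbf{B}(\mathbf{x})=(B_1x_1,\ldots,B_mx_m)$.

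First, expand the left-hand side using the product-space inner product on $\mathcal{H}^m$:
\begin{equation*}
\langle \mathbf{B}(\mathbf{x})-\mathbf{B}(\mathbf{y}),\mathbf{x}-\mathbf{y}\rangle=\sum_{i=1}^{m}\langle B_ix_i-B_iy_i,x_i-y_i\rangle .
\end{equation*}
Second, apply the $\sigma_i$-cocoercivity of $B_i$ to each summand, which gives
\begin{equation*}
\langle B_ix_i-B_iy_i,x_i-y_i\rangle\ge \sigma_i\|B_ix_i-B_iy_i\|^2 .
\end{equation*}
Summing over $i$ yields the lower bound $\sum_{i=1}^m\sigma_i\|B_ix_i-B_iy_i\|^2$.

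Third, identify this sum with the weighted norm. Since $\mathbf{\Sigma}=\Sigma\otimes\Id$ is block diagonal with blocks $\sigma_i\Id$, we have
\begin{equation*}
\sum_{i=1}^m\sigma_i\|B_ix_i-B_iy_i\|^2=\sum_{i=1}^m\langle \sigma_i(B_ix_i-B_iy_i),B_ix_i-B_iy_i\rangle=\langle \mathbf{\Sigma}(\mathbf{B}(\mathbf{x})-\mathbf{B}(\mathbf{y})),\mathbf{B}(\mathbf{x})-\mathbf{B}(\mathbf{y})\rangle .
\end{equation*}
The right-hand side equals $\|\mathbf{B}(\mathbf{x})-\mathbf{B}(\mathbf{y})\|^2_{\mathbf{\Sigma}}$. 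Here $\mathbf{\Sigma}$ is linear, self-adjoint and strongly positive because every $\sigma_i>0$, so the weighted norm is well defined.

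There is no genuine obstacle. The only point needing care is the notational identification in the third step, namely that the Kronecker-lifted diagonal operator $\mathbf{\Sigma}$ acts componentwise, so that the $\mathbf{\Sigma}$-weighted norm splits into the sum of the individually weighted squared norms.
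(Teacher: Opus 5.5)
Your proposal is correct and follows the paper's proof essentially line by line: you expand the product-space inner product into a sum over components, apply the $\sigma_i$-cocoercivity of each $B_i$ termwise, and identify $\sum_{i=1}^m \sigma_i\|B_ix_i-B_iy_i\|^2$ with $\|\mathbf{B}(\mathbf{x})-\mathbf{B}(\mathbf{y})\|^2_{\mathbf{\Sigma}}$ via the block-diagonal structure of $\mathbf{\Sigma}=\Sigma\otimes\Id$. Your remark that $\mathbf{\Sigma}$ is strongly positive because every $\sigma_i>0$ is a small clarification that the paper leaves implicit.
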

	\begin{proof} Let $\mathbf{x}=(x_1,\ldots,x_m)$ and $\mathbf{y}=(y_1,\ldots,y_m)$ where $x_i,y_i\in\mathcal{H}$, $i=1,\ldots,m$. Then the cocoercive property of $B_i$ implies
		\begin{equation*}
			\begin{aligned}
				\langle \mathbf{B}(\mathbf{x})-\mathbf{B}(\mathbf{y}),\mathbf{x}-\mathbf{y}\rangle&=
				\sum_{i=1}^{m}\langle B_ix_i-B_iy_i,x_i-y_i\rangle\\
				&\ge\sum_{i=1}^{m}\sigma_i\|B_ix_i-B_iy_i\|^2\\
				&=\sum_{i=1}^{m}\langle \sigma_i(B_ix_i-B_iy_i),B_ix_i-B_iy_i\rangle\\
				&=\langle \mathbf{\Sigma}(\mathbf{B}(\mathbf{x})-\mathbf{B}(\mathbf{y})),
				\mathbf{B}(\mathbf{x})-\mathbf{B}(\mathbf{y})\rangle	=\|\mathbf{B}(\mathbf{x})-\mathbf{B}(\mathbf{y})\|^2_{\mathbf{\Sigma}}.
			\end{aligned}
		\end{equation*}	
		The proof is completed.
	\end{proof}

	\begin{dfn}	\rm
		An operator $T:\mathcal{H}\to \mathcal{H}$ is said to be $\delta$-strongly quasi-nonexpansive if $\Fix T\ne \emptyset$ and there exists $\delta>0$ such that
		$$
		\|T(x)-y\|^2+\delta\|(\Id-T)(x)\|^2\le\|x-y\|^2,\quad \forall x\in\mathcal{H},\,\, \forall y\in \Fix T.
		$$
	\end{dfn}

	A sequence $\{z^k\}$ in $\mathcal{H}$ is said to be Fej\'{e}r monotone with respect to a nonempty set $\Theta$ of $\mathcal{H}$ if, for all $v\in \Theta$ and all $k \in \mathbb{N}$, there holds
	\begin{equation*}
		\|z^{k+1}-v\|\leq\|z^k-v\|.
	\end{equation*}
	
	Employing the proof of \cite[Proposition 2.2]{minh}, it is easy to show the following lemma.
	
	\begin{lem}\label{per1}{\rm(Krasnosel'ski\v{\i}--Mann iterations)}
		Let $T:\mathcal{H}\to \mathcal{H}$ be  $\delta$-strongly quasi-nonexpansive. Let $z^0\in\mathcal{H}$ and   set
		\begin{equation*}
			z^{k+1}=(1-\lambda_k)z^k+\lambda_kTz^k,\quad \forall k \in \mathbb{N}
		\end{equation*}
		where $\{\lambda_k\}$ is a sequence in $[0,1+\delta]$ such that $\lim\inf_{k\rightarrow\infty}\lambda_k(1-\lambda_k + \delta)>0$. Then the following hold:
		\begin{itemize}
			\item[\rm(i)] $\{z^k\}$ is Fej\'{e}r monotone with respect to $\Fix T$.
			\item[\rm(ii)]$(\Id-T)(z^k)\to0$ as $k\to\infty$.
			\item[\rm(iii)] $\|\frac{1}{k+1}\sum_{s=0}^{k}(\Id-T)(z^s)\|=O(\frac{1}{\sqrt{k}})$ as $k\to \infty$.
		\end{itemize}
	\end{lem}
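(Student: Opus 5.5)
The plan is to derive a one-step inequality for the relaxed iteration from the definition of $\delta$-strong quasi-nonexpansiveness and then read off (i)--(iii) from it. Fix $y\in\Fix T$ and write $u^k=(\Id-T)(z^k)$, so that $z^{k+1}=z^k-\lambda_k u^k$ and $Tz^k-y=(z^k-y)-u^k$. The hypothesis with $x=z^k$, after expanding $\|(z^k-y)-u^k\|^2$, is equivalent to
\[
2\langle z^k-y,u^k\rangle\ \ge\ (1+\delta)\|u^k\|^2 .
\]
Expanding $\|z^{k+1}-y\|^2=\|z^k-y\|^2-2\lambda_k\langle z^k-y,u^k\rangle+\lambda_k^2\|u^k\|^2$ and inserting this bound, using $\lambda_k\ge0$, I expect
\[
\|z^{k+1}-y\|^2\ \le\ \|z^k-y\|^2-\lambda_k(1+\delta-\lambda_k)\|u^k\|^2 .
\]
This is the key estimate; the rest is bookkeeping.

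Since $\lambda_k\in[0,1+\delta]$, the coefficient $\lambda_k(1+\delta-\lambda_k)$ is nonnegative. Hence $\|z^{k+1}-y\|\le\|z^k-y\|$ for every $y\in\Fix T$, which is (i). Summing the inequality over $k$ gives
\[
\sum_{k}\lambda_k(1+\delta-\lambda_k)\|u^k\|^2\le\|z^0-y\|^2<\infty .
\]
The $\liminf$ assumption supplies $\varepsilon>0$ and $k_0$ with $\lambda_k(1+\delta-\lambda_k)\ge\varepsilon$ for $k\ge k_0$. Therefore $\sum_k\|u^k\|^2<\infty$, and in particular $u^k\to0$, which is (ii).

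For (iii), let $S=\sum_{k\ge0}\|u^k\|^2<\infty$. By the triangle inequality and Cauchy--Schwarz,
\[
\Big\|\frac{1}{k+1}\sum_{s=0}^k u^s\Big\|\le\frac{1}{k+1}\sum_{s=0}^k\|u^s\|\le\frac{1}{k+1}\sqrt{k+1}\,\Big(\sum_{s=0}^k\|u^s\|^2\Big)^{1/2}\le\frac{\sqrt S}{\sqrt{k+1}},
\]
and this is $O(1/\sqrt k)$.

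The only genuinely delicate point is the one-step inequality, in particular the sign bookkeeping that allows $\lambda_k$ up to $1+\delta$ rather than up to $1$. Everything else is standard once that inequality is in hand. The finitely many indices $k<k_0$, where the coefficient may vanish, do not affect summability: the corresponding $\|u^k\|$ are finite numbers, each bounded by $2\|z^k-y\|\le 2\|z^0-y\|$ via quasi-nonexpansiveness.
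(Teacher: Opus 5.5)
Your proof is correct, and it is the standard Krasnosel'ski\v{\i}--Mann argument the paper relies on; the paper gives no proof of its own and only points to the proof of \cite[Proposition 2.2]{minh}. Your steps are the one-step estimate $\|z^{k+1}-y\|^2\le\|z^k-y\|^2-\lambda_k(1+\delta-\lambda_k)\|(\Id-T)(z^k)\|^2$ (using $\lambda_k\ge 0$), telescoping to get $\sum_k\|(\Id-T)(z^k)\|^2<\infty$ from the $\liminf$ condition, and Cauchy--Schwarz for the averaged $O(1/\sqrt{k})$ bound.
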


	Given a set-valued operator $A:\mathcal{H}\rightrightarrows\mathcal{H}$, the domain, the range,  the graph and the zeros of $A$ are respectively denoted by
	$\dom A =\{x\in\mathcal{H} : A(x)\neq\varnothing\}$,  $\range A =\{u\in\mathcal{H}: u\in A(x)\ \hbox{for}\ \forall x\in\dom A\}$,
	$\gra A =\{(x,u)\in\mathcal{H}\times\mathcal{H} : u\in A(x)\}$  and
	$\zer A =\{x\in\mathcal{H} : 0\in A(x)\}$.
	The inverse operator of $A$, denoted by $A^{-1}$, is defined through $x\in A^{-1}(u)\Leftrightarrow u\in A(x)$.
	
	\begin{dfn}{\rm(\cite[Definition 20.1 and Definition 20.20]{BC2011})}
		{\rm
			A set-valued operator $A: \mathcal{H} \rightrightarrows\mathcal{H}$ is said to be
			\begin{itemize}
				\item[(i)]  monotone if $\langle x-y,u-v\rangle \geq 0$, $\forall (x,u),(y,v) \in \gra A $.
				\item[(ii)]  maximally monotone if there exists no monotone operator $B: \mathcal{H} \rightrightarrows\mathcal{H}$ such that $\gra B$ properly contains $\gra A,$ i.e., for every $(x,u) \in \mathcal{H}\times\mathcal{H}$
				$$
				(x,u) \in \gra A  \ \ \Leftrightarrow \ \ \langle x-y,u-v \rangle \geq 0,  \ \  \forall(y,v)\in \gra A.
				$$
			\end{itemize}
		}
	\end{dfn}
	
	Given an operator $A : \mathcal{H}\rightrightarrows\mathcal{H}$, the resolvent of $A$ with parameter $\lambda>0$ is  denoted by $J_{\lambda A} =(\Id+\lambda A)^{-1}$. From \cite[Corollary 23.11]{BC2011}, if the operator $A$ is  maximally  monotone, then $J_{\lambda A}$ is single-valued and 1-cocoercive.

	\begin{dfn}\rm($(m,n)$-nondecreasing vector).
		Let $n\in \mathbb{N}_+$, $m\in \mathbb{N}$. A vector
		$$
		E=(E_1,E_2,\dots,E_n)\in\{0,1,\dots,m\}^n
		$$
		is said to be $(m,n)$-nondecreasing  if $E_1=0,$ $E_n=m$ and $E_i\le E_{i+1}$ for each $i\in[1,n-1]$.
	\end{dfn}
	Let $n\in \mathbb{N}_+$, $m\in \mathbb{N}$ and $E$ be an $(m,n)$-nondecreasing  vector. Denote by $\mathcal{S}(E)$  the set of matrices $R\in \mathbb{R}^{n\times m}$ that have a  staircase structure w.r.t. $E$, i.e.,
	$$R_{ij}
	=0\ \ \hbox{for}\ \hbox{all} \ i=1,\dots,n\ \hbox{and}\ j>E_i.$$
	Conversely, $\mathcal{S}^c(E)$ denotes the set of matrices $R\in\mathbb{R}^{n\times m}$ that have a complement staircase structure w.r.t. $E$, i.e., $R_{ij}=0$ for all $i=1,\dots,n$ and $j\le E_i$.
	
	We 	next use the concepts of staircase and complement staircase to define the causal pair of matrices and the  relatively causal triple of matrices.
	\begin{dfn}\label{causal}\rm
		(Causal pair of matrices). A pair of matrices $H,G^{\top}\in \mathbb{R}^{n\times m}$ is said to be causal if there exists an $(m,n)$-nondecreasing  vector $E$ such that
		$$H\in\mathcal{S}(E)\ \hbox{and}\ G^\top\in\mathcal{S}^c(E).$$
	\end{dfn}
	
	\begin{dfn}\label{dfn2.6}
		\rm(Relatively causal triple of matrices).
		Let $n\in \mathbb{N}_+$, $m\in \mathbb{N}$. The matrices $P,Q,R^{\top}\in \mathbb{R}^{n\times m}$ are said to be  relatively causal if there exists a pair of $(m,n)$-nondecreasing  vectors $E$ and $F$ satisfying $E_{i-1}\geq F_i$ for each $i\in[2,n]$, such that
		$$
		Q\in\mathcal{S}(F),\,\ R^\top\in\mathcal{S}^c(E)\hbox{~and~}P\in\mathcal{S}^c(F)\cap\mathcal{S}(E).
		$$
	\end{dfn}

	\begin{rem}
		A detailed analysis of the several selection ways of causal pairs of matrices is presented in \cite{Anton}, which also covers all the selection approaches listed in \cite{minh}, thus, no further elaboration is provided here.
		
		To verify the validity of Definition \ref{dfn2.6}, we analyze the triple matrices $P,Q$ and $R$ in \cite[Example 4.3]{minh}, which is given as
		follows:
		\begin{equation*}
			P = \begin{bmatrix}
				\begin{array}{c}
					0_{1 \times (n-2)} \\
					\hline
					\mathrm{Id}_{(n-2)} \\
					\hline
					0_{1 \times (n-2)}
				\end{array}
			\end{bmatrix}, \quad
			Q = \begin{bmatrix}
				\begin{array}{c}
					0_{2 \times (n-2)} \\
					\hline
					\mathrm{Id}_{(n-2)}
				\end{array}
			\end{bmatrix}, \quad
			R = \begin{bmatrix} \mathrm{Id}_{(n-2)} \mid 0_{(n-2) \times 2} \end{bmatrix}.
		\end{equation*}
		Set $E=(0,1,...,n-2,n-2)^n$ and $F=(0,0,1,...,n-2)^n$ with $E_{i-1}= F_i$ for each $i\in[2,n]$. It is obvious that
		$Q\in\mathcal{S}(F),$ $R^\top\in \mathcal{S}^c(E)$ and
		$P\in\mathcal{S}^c(F)\cap\mathcal{S}(E)$.
		
	\end{rem}
	
	\begin{lem}\label{lle}{\rm(\cite[Lemma 2.47]{BC2011})}
		Let $\Theta$ be a nonempty set of $\mathcal{H}$, and $\{x^k\}$ be a sequence in $\mathcal{H}$.  Assume that the following conditions hold:\vskip 1mm
		\begin{itemize}
			\item[{\rm(i)}]  for every $x \in \Theta$, $\lim_{k \rightarrow \infty}\|x^{k}-x\|$ exists;
			\noindent
			\item[{\rm(ii)}] every weak sequential cluster point of $\{x^k\}$ belongs to $\Theta$, i.e., $\omega_w(x^k)\subseteq \Theta$.
		\end{itemize}
		Then the sequence $\{x^k\}$ converges weakly to a point in $\Theta$.
	\end{lem}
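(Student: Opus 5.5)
This is the classical Opial-type lemma from Bauschke--Combettes, so I would follow the standard route.

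First I show that $\{x^k\}$ is bounded. I fix any $x\in\Theta$, which is possible since $\Theta$ is nonempty. By (i) the real sequence $\|x^k-x\|$ converges, hence it is bounded. In a Hilbert space, bounded sequences have weakly convergent subsequences, so $\omega_w(x^k)\neq\emptyset$. By (ii), $\omega_w(x^k)\subseteq\Theta$.

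The key step is to show that $\omega_w(x^k)$ is a singleton. Let $y,z\in\omega_w(x^k)$, with $x^{k_j}\rightharpoonup y$ and $x^{l_j}\rightharpoonup z$. Both points lie in $\Theta$, so by (i) the limits $\ell_y=\lim\|x^k-y\|$ and $\ell_z=\lim\|x^k-z\|$ exist. I would expand
\begin{equation*}
\|x^k-z\|^2=\|x^k-y\|^2+2\langle x^k-y,y-z\rangle+\|y-z\|^2 .
\end{equation*}
This shows that $\langle x^k,y-z\rangle$ converges to a limit $\mu$ along the whole sequence, since every other term in the identity converges. Evaluating $\mu$ along the subsequence $x^{k_j}$ gives $\mu=\langle y,y-z\rangle$. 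Evaluating it along $x^{l_j}$ gives $\mu=\langle z,y-z\rangle$. Subtracting the two expressions yields $\|y-z\|^2=0$, so $y=z$.

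Finally, I argue by contradiction. Suppose $x^k\not\rightharpoonup y$, where $y$ is the unique point of $\omega_w(x^k)$. Then there is a $u\in\mathcal{H}$, an $\varepsilon>0$ and a subsequence with $|\langle x^{k_j}-y,u\rangle|\ge\varepsilon$. That subsequence is bounded, so it has a further weakly convergent subsequence. Its weak limit lies in $\omega_w(x^k)=\{y\}$, which contradicts $|\langle x^{k_j}-y,u\rangle|\ge\varepsilon$. Hence $x^k\rightharpoonup y\in\Theta$.

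The main obstacle is the uniqueness step. The point to get right is that the inner product term converges along the full sequence, not merely along subsequences.
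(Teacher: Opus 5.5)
Your proof is correct and follows essentially the same route as the source: the paper gives no proof of its own and simply cites \cite[Lemma 2.47]{BC2011}. That proof also gets boundedness from (i) and shows uniqueness of weak sequential cluster points via the identity linking $\langle x^k,y-z\rangle$ to $\|x^k-y\|^2$ and $\|x^k-z\|^2$. It then concludes from the fact that a bounded sequence with a single weak sequential cluster point converges weakly, which your final contradiction argument verifies directly.
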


	\section{Reflected forward-backward splitting algorithm}
	\label{Sect3}
	
	In this section, we first develop a reflected forward-backward splitting  algorithmic framework for solving the problem \eqref{ABC}. Then we present the relation of the proposed algorithm and some existing
	methods.
	
	Throughout this section, the solution set
	of the problem \eqref{ABC} is assumed to be nonempty, i.e., $\zer( {\textstyle \sum_{i=1}^{n}}  A_i+{\textstyle \sum_{i=1}^{m}} B_i+\sum_{i=1}^{l})\neq \emptyset $. Let $B_i: \mathcal{H}\to \mathcal{H} $ be $\sigma_i$-cocoercive,  $i=1,\dots,m$ and $C_i: \mathcal{H}\to \mathcal{H} $ be monotone and $L_i$-Lipschitz continuous, $i=1,\dots,l$. Furthermore, let $\Sigma=\diag(\sigma_1,\dots,\sigma_m)$ and $L=\diag(L_1,\dots,L_l)$.
	
	\subsection{Algorithm}
	Before giving our algorithm, we  introduce some matrices satisfying the following assumptions.
	\begin{asm}
		\label{asm11}
		The  matrices $M\in \mathbb{R}^{n\times (n-1)},  H,  G^{\top}\in\mathbb{R}^{n\times m}, P,Q,R^\top\in \mathbb{R}^{n\times l}$ and  $K\in \mathbb{R}^{n\times n}$ satisfy the following properties:
		\begin{itemize}
			\item[{\rm(a)}]$\ker M^{\top}\subseteq \mathbb{R}e_n$, where $e_n=(1,\dots,1)^{\top}\in \mathbb{R}^n$.
			\item[{\rm(b)}]$H^{\top}e_n=Ge_n=e_m,$ $Q^{\top}e_n=P^{\top}e_n=Re_n=e_l,$
			where $e_m=(1,\dots,1)^{\top}\in\mathbb{R}^m$ and $e_l=(1,\dots,1)^{\top}\in\mathbb{R}^l$.
			$H$, $G^\top$ are causal, and $P,Q,R^{\top}$ are relatively causal.
			\item[{\rm(c)}]$K$ is symmetric   such that $e_n^{\top}Ke_n=0$ and $K-MM^{\top}-\frac{1}{2}(H-G^{\top}){\Sigma^{-1}}(H^{\top}-G)-
			(P-Q)L(P^\top-Q^\top)-(P-R^{\top})L(P^\top-R)\succeq 0$.
		\end{itemize}
	\end{asm}

	Now we present a reflected forward-backward splitting algorithmic framework for solving
	the problem (\ref{ABC}).
	
	\begin{alg}\label{AlgG}
		\hrule\vskip 0.5mm
		\rm\noindent\rm
		\vskip 0.5mm
		\hrule
		
		\vskip 1mm
		\noindent
		\textbf{Pick:} $\gamma_k\in(0,1)$, and the matrices $M\in \mathbb{R}^{n\times (n-1)},  H,  G^{\top}\in\mathbb{R}^{n\times m}, P,Q,R^\top\in \mathbb{R}^{n\times l},$
		$K\in \mathbb{R}^{n\times n}$ satisfying Assumption \ref{asm11} and $U\in\mathbb{R}^{n\times t},t\geq1$.\\
		\textbf{Let:}\\ \small$K=MM^{\top}+UU^{\top}+\frac{1}{2}(H-G^{\top}){\Sigma^{-1}}(H^{\top}-G)+(P-Q)
		\normalsize
		L(P^\top-Q^\top)+(P-R^\top)L(P^\top-R)$ and $(d_1,\dots,d_n)=2(\diag(K))^{\odot(-1)}$.\\
		1: \textbf{Input:} $\mathbf{z}^0=(z_1^0,\dots,z_{n-1}^0)\in \mathcal{H}^{n-1}$.\\
		2: \textbf{for}  $k=0,1,2,\dots$ \textbf{do} \\
		3: \ \ \ \ \  \textbf{for} $i=1,\dots,n$ \textbf{do}\\
		\begin{equation}
			\label{main-scheme}
			\begin{aligned}
				x_i^{k} &= J_{d_i A_i}\biggl( -d_i\sum_{j=1}^{i-1}K_{ij}x_j^{k} + d_i\sum_{j=1}^{n-1}M_{ij}z_j^k - d_i\sum_{j=1}^{m}H_{ij} B_{j}\biggl( \sum_{h=1}^{i-1} G_{jh}x_h^k \biggr) \\
				&\quad - d_i\sum_{j=1}^{l}(P-Q)_{ij} C_{j}\biggl( \sum_{h=1}^{i-1} R_{jh}x_h^k \biggr) - d_i\sum_{j=1}^{l}Q_{ij} C_{j}\biggl( \sum_{h=1}^{i-1} P_{hj}x_h^k \biggr) \biggr).
			\end{aligned}
		\end{equation}
		4: \ \ \ \ \  \textbf{end for}\\
		5:  $z_i^{k+1}=z_i^k-\gamma_k{\textstyle \sum_{i=1}^{n}} M_{ij}x_i^k,
		\quad i \in\{1,\ldots,n-1\}.$\\
		6: \textbf{end for}
		\vskip 1mm
		
		\hrule
		
		\hspace*{\fill}
	\end{alg}

	Let
	$\mathbf{A}=(A_1,...,A_n)$ and $D=\diag(d_1,\dots,d_n)$. It follows from \cite[Proposition 20.23]{BC2011} that $\mathbf{A}$ is  maximally monotone  and its resolvent $J_{\mathbf{D}\mathbf{A}}: \mathcal{H}^n\to \mathcal{H}^n$ is given by $J_{\mathbf{D}\mathbf{A}}=(J_{d_1A_1}, \dots,J_{d_nA_n})$. Similarly let
	$\mathbf{B}=(B_1,...,B_{m})$  and
	$\mathbf{C}=(C_1,...,C_{l}).$
	Then  Algorithm \ref{AlgG} can be recast as
	\begin{equation*}\label{T1}
		\left\{
		\aligned
		&\mathbf{x}^k=J_{ \mathbf{ D A}}(\mathbf{DNx}^k+\mathbf{DMz}^k-\mathbf{DHB(Gx}^k)
		-\mathbf{D(P-Q)C(Rx}^k)-\mathbf{DQC(P^*x}^k)),\\
		&	\mathbf{z}^{k+1}= \mathbf{z}^k-\gamma_k\mathbf{M}^*\mathbf{x}^k,
		\endaligned
		\right.
	\end{equation*}
	where $N=-\slt(K)$.
	For the convenience of the convergence analysis, we rewrite the above scheme as
	\begin{equation*}
		\label{Td1}
		\mathbf{z}^{k+1}=(1-\frac{\gamma_k}{\gamma})\mathbf{z}^k+\frac{\gamma_k}{\gamma}T\mathbf{z}^k,
	\end{equation*}
	where $\gamma>0$ and the	operator $T:\mathcal{H}^{n-1}\to \mathcal{H}^{n-1}$ is defined by
	\begin{equation}\label{T}
		\aligned
		T(\mathbf{z})= \mathbf{z}-\gamma\mathbf{M}^*\mathbf{x},
		\endaligned
	\end{equation}
	where $\mathbf{x}=J_{ \mathbf{ D A}}\mathbf{(DNx+DMz-DHB(Gx)}
	-\mathbf{D(P-Q)C(Rx)-DQC(P^*x))}.$
	
	\begin{rem}
		\label{rem11}
		\begin{itemize}	\item[{\rm(1)}]
			According to \cite{Anton},	a simple choice of $K$ satisfying Assumption \ref{asm11}(c) is
			$
			K=MM^{\top}+UU^{\top}+\frac{1}{2}(H-G^{\top}){\Sigma^{-1}}(H^{\top}-G)+(P-Q)
			L(P^\top-Q^\top)+(P-R^\top)L(P^\top-R),
			$
			where $U\in\mathbb{R}^{n\times t},t\geq1$ is such that $\mathbb{R}e_n\subseteq \ker U^{\top}$. Since $d_i$, $i=1,\ldots,n$ are generally regarded as  the stepsizes in Algorithm \ref{AlgG}, larger stepsizes can be obtained by setting $U=0$ within $K$.
			\item[{\rm(2)}]
			By $N=-\slt(K)$ and $D=\diag(d_1,\dots,d_n)$, we get $K=2D^{-1}-N-N^{\top}$.
			Then Assumption \ref{asm11}(c) becomes that	the matrices $M\in \mathbb{R}^{n\times (n-1)},  H,  G^{\top}\in\mathbb{R}^{n\times m}, P, Q, R^\top\in \mathbb{R}^{n\times l}$ and  $N, D\in \mathbb{R}^{n\times n} $ satisfy the following properties:
			\begin{itemize}
				\item[{\rm(a)}] $e_n^{\top}(D^{-1}-N)e_n=0$;
				\item[{\rm(b)}]$2D^{-1}-N-N^{\top}\succeq MM^{\top}+\frac{1}{2}(H-G^\top){\Sigma^{-1}}(H^{\top}-G)+(P-Q) L(P^\top-Q^\top)+(P-R^\top) L(P^\top-R)$.
			\end{itemize}
			\item[{\rm(3)}]
			The concepts of causal pairs and relatively causal triple of matrices captures the full class of matrices $H$, $G^{\top}\in\mathbb{R}^{n\times m}$, $P$, $Q$, $R^\top$ $\in\mathbb{R}^{n\times l}$, such that the terms $\mathbf{HB(Gx)}$, $\mathbf{(P-Q)C(Rx)}$ and $\mathbf{QC(P^*x)}$ in  \eqref{T} can be computed by using only a single evaluation of each operator $B_i$ and $C_i$,  together with simple algebraic operations. Furthermore, $\mathbf{HB(Gx)}$, $\mathbf{(P-Q)C(Rx)}$ and $\mathbf{QC(P^*x)}$ are strictly lower triangular for all choices of $B_i$ and $C_i$ according to Definitions \ref{causal} and \ref{dfn2.6}. Therefore, the proposed algorithmic framework is explicit.
		\end{itemize}
	\end{rem}
	
	We can present a lifted algorithm reformulation as  in \cite{Anton}. Let $\mathcal{L}=MM^\top$ and $\mathbf{w}^k=\mathbf{M}\mathbf{z}^k$, then Algorithm \ref{AlgG} with $\mathbf{z}^0=0$ is equivalent to the following form,
	\begin{equation}\label{Simp}
		\left\{
		\begin{aligned}
			&x_i^{k} = J_{d_i A_i}\biggl( -d_i\sum_{j=1}^{i-1}K_{ij}x_j^{k} + d_i\sum_{j=1}^{n}w_j^k - d_i\sum_{j=1}^{m}H_{ij} B_{j}\biggl( \sum_{h=1}^{i-1} G_{jh}x_h^k \biggr) \\
			& - d_i\sum_{j=1}^{l}(P-Q)_{ij} C_{j}\biggl( \sum_{h=1}^{i-1} R_{jh}x_h^k \biggr) - d_i\sum_{j=1}^{l}Q_{ij} C_{j}\biggl( \sum_{h=1}^{i-1} P_{hj}x_h^k \biggr) \biggr),i \in\{1,\ldots,n\},\\
			&w_i^{k+1}=w_i^k-\gamma_k{\textstyle \sum_{i=1}^{n}} \mathcal{L}_{ij}x_i^k, \quad\forall i \in\{1,\ldots,n\},
		\end{aligned}
		\right.
	\end{equation}
	where the initial point $\mathbf{w}^0 =0$.
	The above scheme eliminates one matrix operation  each iteration while it sacrifices the minimum enhancement property of Algorithm \ref{AlgG} due to $\mathbf{w}^k\in \mathcal{H}^n$.

	\subsection{Relationship with some existing methods}
	
	In form, the proposed Algorithm \ref{AlgG} is a  forward-backward type splitting algorithmic framework with reflected terms. When $n\ge 1$, $m\ge 1$ and $l\ge 1$ in the problem \eqref{ABC}, our proposed Algorithm \ref{AlgG} is new to the literature.
	
	Algorithm \ref{AlgG} reduces   to previous work when  the matrices $P,Q,R,H,G$ are specially selected.
	\begin{itemize}
		\item[(i)] In the case of  $P=Q=R^\top=0$,  Algorithm \ref{AlgG} becomes to
		\begin{equation}
			\label{AA}
			\left\{
			\begin{aligned}
				&x_i^{k} = J_{d_i A_i}\biggl( -d_i\sum_{j=1}^{i-1}K_{ij}x_j^{k} + d_i\sum_{j=1}^{n-1}M_{ij}z_j^k - d_i\sum_{j=1}^{m}H_{ij} B_{j}\biggl( \sum_{h=1}^{i-1} G_{jh}x_h^k \biggr)\biggl),\\
				&\qquad\qquad\qquad\qquad\qquad\qquad\qquad\qquad\qquad\qquad i\in\{1,\ldots,n\},\\
				&z_i^{k+1}=z_i^k-\gamma_k{\textstyle \sum_{i=1}^{n}} M_{ij}x_i^k,
				\quad i \in\{1,\ldots,n-1\},
			\end{aligned}
			\right.
		\end{equation}
		which solves the inclusion problem $0 \in \sum_{i=1}^{n}A_i(x)+\sum_{i=1}^{m}B_i(x)$. \AA kerman et al. \cite{Anton} first introduced the algorithmic framework \eqref{AA}. Then Dao et al. \cite{minh} further analyzed this framework under the assumption $\sigma_1=\cdots=\sigma_m$.
		
		\item[(ii)] In the case of $H=G^\top=0$, Algorithm \ref{AlgG} becomes to
		\begin{equation}
			\label{AA1}
			\left\{
			\begin{aligned}
				&x_i^{k} = J_{d_i A_i}\biggl( -d_i\sum_{j=1}^{i-1}K_{ij}x_j^{k} + d_i\sum_{j=1}^{n-1}M_{ij}z_j^k - d_i\sum_{j=1}^{l}(P-Q)_{ij} C_{j}\biggl( \sum_{h=1}^{i-1} R_{jh}x_h^k \biggr)\\
				& \qquad- d_i\sum_{j=1}^{l}Q_{ij} C_{j}\biggl( \sum_{h=1}^{i-1} P_{hj}x_h^k \biggr) \biggr),\quad i\in\{1,\ldots,n\},\\
				&z_i^{k+1}=z_i^k-\gamma_k{\textstyle \sum_{i=1}^{n}} M_{ij}x_i^k,
				\quad i \in\{1,\ldots,n-1\},
			\end{aligned}
			\right.
		\end{equation}
		which solves the inclusion problem $0 \in   \sum_{i=1}^{n}A_i(x)+\sum_{i=1}^{l}C_i (x)$.
		Dao et al. \cite{minh} first proposed the algorithmic framework \eqref{AA1} and established its  convergence under the assumption  $L_1=\cdots=L_l$.
		
		\item[(iii)]	Algorithm \ref{AlgG} encompass there splitting methods in  \cite{FBWR}.
		The appendix provides detailed illustrations. Furthermore, it is worth mentioning that the proposed framework may yield a larger stepsize range than a direct analysis of the corresponding individual algorithm, as shown in Appendix \ref{a2}.
	\end{itemize}

	\section{Convergence analysis}	
	\label{Sect4}
	
	In this section the weak convergence of Algorithm \ref{AlgG} is established.
	
	The next lemma gives the relation of the fixed point set of $T$ and the  zeros of ${\textstyle \sum_{i=1}^{n}}  A_i+{\textstyle \sum_{i=1}^{m}} B_i+\sum_{i=1}^{l} C_i$.	
	
	\begin{lem}
		\label{lem11}
		(Fixed points and zeros)
		Let
		$$
		\aligned
		\Omega=\{(\mathbf{z},x)\in \mathcal{H}^{n-1}\times \mathcal{H}:\ &\mathbf{x}=J_{ \mathbf{DA}}(\mathbf{DNx+DMz- DHB(Gx)-DPC(Rx)}) \,\,\hbox{where}\\
		&\mathbf{x}=(x,\dots,x)\in\mathcal{H}^n\}.
		\endaligned
		$$
		Suppose that Assumption \ref{asm11}(a)$\&$(b) hold. Then the following assertions hold.
		\begin{itemize}
			\item[{\rm(a)}] If $\mathbf{z}\in \Fix T$, then there exists $x\in \mathcal{H}$ such that $(\mathbf{z},x)\in \Omega$.
			\item[{\rm(b)}] If $x\in \zer \left (  {\textstyle \sum_{i=1}^{n}} A_i+ {\textstyle \sum_{i=1}^{m}B_i} + {\textstyle \sum_{i=1}^{l}C_i} \right ) $, then there exists $\mathbf{z}\in \mathcal{H}^{n-1}$ such that $(\mathbf{z},x)\in \Omega$.
			\item[{\rm(c)}] If $(\mathbf{z},x)\in \Omega$, then $\mathbf{z}\in \Fix T $ and $x\in \zer \left (  {\textstyle \sum_{i=1}^{n}} A_i+ {\textstyle \sum_{i=1}^{m}B_i}  + {\textstyle \sum_{i=1}^{l}C_i} \right ) $.
			Consequently, $$\Fix T \ne \emptyset \Leftrightarrow \Omega \ne \emptyset \Leftrightarrow \zer \left (  {\textstyle \sum_{i=1}^{n}} A_i+ {\textstyle \sum_{i=1}^{m}B_i}  + {\textstyle \sum_{i=1}^{l}C_i} \right )\ne \emptyset.$$
		\end{itemize}
	\end{lem}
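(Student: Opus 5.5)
The proof rests on what consensus vectors $\mathbf{x}=(x,\dots,x)=e_n\otimes x$ do under the matrices of Assumption \ref{asm11}(b). First I record the needed identities. By $Ge_n=e_m$, $Re_n=e_l$ and $P^\top e_n=e_l$, every component of $\mathbf{Gx}$, $\mathbf{Rx}$ and $\mathbf{P^*x}$ equals $x$. Hence $\mathbf{B(Gx)}=(B_1x,\dots,B_mx)$ and $\mathbf{C(Rx)}=\mathbf{C(P^*x)}=(C_1x,\dots,C_lx)$. Consequently
\[
\mathbf{(P-Q)C(Rx)}+\mathbf{QC(P^*x)}=\mathbf{PC(Rx)},
\]
so for consensus $\mathbf{x}$ the equation defining $T$ in \eqref{T} coincides with the equation in $\Omega$.

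Second, I record two facts about $M$, both in finite dimensions and then lifted by the Kronecker structure. Since $M^\top\in\mathbb{R}^{(n-1)\times n}$, we have $\dim\ker M^\top\ge 1$; together with (a) this forces $\ker M^\top=\mathbb{R}e_n$, so $M^\top e_n=0$. Also $\range M=(\ker M^\top)^\perp=e_n^\perp$. Lifting, $\mathbf{M}^*(e_n\otimes x)=0$, and every $\mathbf{v}\in\mathcal{H}^n$ with $\sum_i v_i=0$ lies in $\range\mathbf{M}$. To see the latter, write $v=\sum_k \mathbf{v}$ in a basis decomposition: apply the matrix statement coordinatewise via $\mathbf{M}=M\otimes\Id$, or directly use a right inverse of $M$ on $e_n^\perp$. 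I also use $K=2D^{-1}-N-N^\top$ and $e_n^\top Ke_n=0$, which give $e_n^\top(D^{-1}-N)e_n=0$ (Remark \ref{rem11}(2)).

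For (a), let $\mathbf{z}\in\Fix T$ and let $\mathbf{x}$ be the point defining $T\mathbf{z}$. Then $\mathbf{M}^*\mathbf{x}=0$, so each $\mathcal{H}$-coordinate of $\mathbf{x}$ lies in $\ker M^\top\otimes\mathcal{H}=\mathbb{R}e_n\otimes\mathcal{H}$. Thus $\mathbf{x}=(x,\dots,x)$, and by the first step $(\mathbf{z},x)\in\Omega$.

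For (c), take $(\mathbf{z},x)\in\Omega$. By the first step, $\mathbf{x}$ is a fixed point of the implicit equation in \eqref{T}. Since $\mathbf{M}^*\mathbf{x}=0$, we get $T\mathbf{z}=\mathbf{z}$. Next, the resolvent characterization $\mathbf{x}=J_{\mathbf{DA}}(\mathbf{y})\iff \mathbf{D}^{-1}(\mathbf{y}-\mathbf{x})\in\mathbf{A}\mathbf{x}$ gives
\[
a:=\mathbf{Nx}+\mathbf{Mz}-\mathbf{HB(Gx)}-\mathbf{PC(Rx)}-\mathbf{D}^{-1}\mathbf{x},\qquad a_i\in A_ix .
\]
Summing components (applying $e_n^\top\otimes\Id$), each term reduces as follows:
\begin{itemize}
\item the $N$ and $D^{-1}$ terms vanish, since $e_n^\top(N-D^{-1})e_n=0$;
\item $\mathbf{Mz}$ vanishes, since $M^\top e_n=0$;
\item $H^\top e_n=e_m$ gives $\sum_j B_jx$;
\item $P^\top e_n=e_l$ gives $\sum_j C_jx$.
\end{itemize}
Hence $0=\sum_i a_i+\sum_jB_jx+\sum_jC_jx$, so $x$ is a zero.

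For (b), choose $a_i\in A_ix$ with $\sum a_i+\sum B_jx+\sum C_jx=0$ and set
\[
\mathbf{v}:=a+\mathbf{D}^{-1}\mathbf{x}-\mathbf{Nx}+\mathbf{HB(Gx)}+\mathbf{PC(Rx)}.
\]
By the same summation, $\sum_iv_i=0$, so $\mathbf{v}=\mathbf{Mz}$ for some $\mathbf{z}$. Reading the resolvent characterization backwards yields $(\mathbf{z},x)\in\Omega$. The equivalences then follow by combining (a)–(c).

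The main obstacle is the step $\ker M^\top=\mathbb{R}e_n$ (hence $M^\top e_n=0$) from the inclusion in (a) alone, via the dimension count, together with transferring the range/kernel facts from $\mathbb{R}^n$ to $\mathcal{H}^n$ through the Kronecker product. I would handle the transfer by fixing a matrix $M^\dagger$ with $MM^\dagger=\Id-\frac1n e_ne_n^\top$ and setting $\mathbf{z}=\mathbf{M}^\dagger\mathbf{v}$.
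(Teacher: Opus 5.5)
Your proof is correct and follows the same route as the paper's: consensus vectors collapse the operator terms via Assumption \ref{asm11}(b); (a) comes from $\mathbf{M}^*\mathbf{x}=0$; (c) from summing the resolvent inclusion; and (b) from showing that the required vector lies in $\range\mathbf{M}$. Your write-up is more careful than the paper's in three places --- you derive $M^\top e_n=0$ from the dimension count, you reconcile the equation defining $T$ with the one defining $\Omega$ via $\mathbf{(P-Q)C(Rx)}+\mathbf{QC(P^*x)}=\mathbf{PC(Rx)}$ at consensus points, and you obtain surjectivity onto $\{\mathbf{v}:\sum_i v_i=0\}$ in $\mathcal{H}^n$ through $M^\dagger$ instead of an unjustified closed-range claim --- and, like the paper, you also need $e_n^\top Ke_n=0$ from Assumption \ref{asm11}(c).
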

	\begin{proof}
		(a): Let $\mathbf{z}\in \Fix T$ and set $\mathbf{x}=J_{ \mathbf{DA}}(\mathbf{DNx+DMz- DHB(Gx)-DPC(Rx)})$. Since $T(\mathbf{z})=\mathbf{z}-\gamma\mathbf{M^*x}$ and $\mathbf{z}\in\Fix T$, we have $\mathbf{x}=(x,\dots,x)$.
		
		(b): Let $x\in\zer \left (  {\textstyle \sum_{i=1}^{n}} A_i+ {\textstyle \sum_{i=1}^{m}B_i}  + {\textstyle \sum_{i=1}^{l}C_i} \right )$, and set $\mathbf{x}={(x,\dots,x)}.$ Then there exists $\mathbf{v}=(v_1,\dots,v_n)\in\mathcal{H}^n$ such that $v_i\in A_i(x)$, and ${\textstyle \sum_{i=1}^{n}} v_i+ {\textstyle \sum_{i=1}^{m}B_i} + {\textstyle \sum_{i=1}^{l}C_i}=0$. Define $\mathbf{y}=(y_1,\dots,y_n)\in\mathcal{H}^n$ according to $\mathbf{y=Dv+x}$, so that $\mathbf{y\in DAx+x}$ and $\mathbf{x}=J_{\mathbf{DA}}(\mathbf{y})$. To complete the proof, we must show there exists $\mathbf{z}\in$$\mathcal{H}^{n-1}$ such that $\mathbf{y=\mathbf{DNx+DMz- DHB(Gx)-DPC(Rx)}}$, which is equivalent to the $\mathbf{D^{-1}y-Nx+HB(Gx)+PC(Rx)}\in \range \mathbf{M}$. To this end, first note that Assumption \ref{asm11} (a) and (c) imply
		\begin{equation*}
			\range\mathbf{M}=(\ker\mathbf{M^*})^\perp=\{(x_1,...,x_n)\in\mathcal{H}^n:\sum_{i=1}^{n}x_i=0\}.
		\end{equation*}
		By Assumption \ref{asm11}(b)\&(c) and (a) of Remark \ref{rem11} (2), we have
		\begin{equation*}
			\begin{aligned}
				&\sum_{i=1}^{n}(\mathbf{D}^{-1}\mathbf{y-Nx+HB(Gx)+PC(Rx)})_i\\
				=&\sum_{i=1}^{n}\frac{1}{d_i}y_i-\sum_{i=1}^{n}\sum_{j=1}^{n}N_{ij}x+\sum_{i=1}^{n}\sum_{j=1}^{m}H_{ij}(B_j(\sum_{k=1}^{n}G_{jk}x))+\sum_{i=1}^{n}\sum_{j=1}^{l}P_{ij}(C_j(\sum_{k=1}^{n}R_{jk}x))\\				=&\sum_{i=1}^{n}v_i+\sum_{i=1}^{n}\frac{x}{d_i}-\sum_{i=1}^{n}\sum_{j=1}^{n}N_{ij}x+\sum_{i=1}^{n}\sum_{j=1}^{m}H_{ij}(B_j(\sum_{k=1}^{n}G_{jk}x))\\
				&+\sum_{i=1}^{n}\sum_{j=1}^{l}P_{ij}(C_j(\sum_{k=1}^{n}R_{jk}{x}))\\
				=&\sum_{i=1}^{n}v_i+\sum_{j=1}^{m}B_j(x)+\sum_{j=1}^{l}C_j(x)=0.\\
			\end{aligned}
		\end{equation*}
		Hence $\mathbf{D^{-1}y-Nx+HB(Gx)+PC(Rx)}\in\range \mathbf{M}$, as required.
		
		(c): Let $(\mathbf{z},x)\in \Omega$ and set $\mathbf{y=\mathbf{DNx+DMz- DHB(Gx)-DPC(Rx)}}$ where $\mathbf{x}=(x,\dots,x),$ we have $\mathbf{z}\in\Fix T$.  Since $\mathbf{x=}J_{\mathbf{DA}}(\mathbf{y})$, we have $\mathbf{DA(x)}\ni\mathbf{y-x}=\mathbf{DNx+DMz- DHB(Gx)-DPC(Rx)}-\mathbf{x}.$ Then $\mathbf{A(x)+HB(Gx)}+\mathbf{PC(Rx)}\ni\mathbf{ Nx+Mz-D^{-1}x}$, Assumption \ref{asm11}(a)\&(b) and (a) of Remark \ref{rem11}(2) give
		\begin{equation*}
			(\sum_{i=i}^{n}A_i+\sum_{i=1}^{m}B_i+\sum_{i=1}^{l}C_i)(x)\ni ((e^\top M)\otimes\Id)\mathbf{z}+\sum_{i=1}^{n}\sum_{j=1}^{n}N_{ij}x-\sum_{i=1}^{n}\frac{x}{d_i}=0,
		\end{equation*}
		which shows that $x\in \zer(\sum_{i=1}^{n}A_i+\sum_{i=1}^{m}B_i+\sum_{i=1}^{l}C_i)$.
	\end{proof}
	
	Now we present a lemma which is key for the  convergence analysis of Algorithm \ref{AlgG}.
	\begin{lem}\label{lem22}
		Suppose Assumption \ref{asm11}  holds.	Let $\bar{\mathbf{z}}=(\bar{z}_1,\dots,\bar{z}_{n-1})\in\Fix T$. Then, for all $\mathbf{z}=(z_1,\dots,z_{n-1})\in\mathcal{H}^{n-1}$, we have
		\begin{equation}\label{1.2}
			\begin{aligned}
				&\|T(\mathbf{z})-\mathbf{\bar{z}}\|^{2}+	\frac{1-\gamma}{\gamma}\|(\Id-T)(\mathbf{z})\|^2\leq
				\|\mathbf{z}-\mathbf{\bar{z}}\|^{2}\\
				&+\gamma\langle\mathbf{x}-\mathbf{\bar{x}},[\mathbf{M}\mathbf{M}^{*}+\mathbf{N}
				+\mathbf{N}^*-2\mathbf{D}^{-1}+\frac{1}{2}\mathbf{(H-G^{*}){\Sigma^{-1}}(H^{*}-G)}\\				&+\mathbf{(P-Q)\boldsymbol{L}(P^*-Q^*)+(P-R^{*})\boldsymbol{L}(P^*-R)}](\mathbf{x}-
				\mathbf{\bar{x}})\rangle.
			\end{aligned}
		\end{equation}
		In particular, if $\gamma\in (0,1)$, then $T$ is $\delta$-strongly quasi-nonexpansive with $\delta=\frac{1-\gamma}{\gamma}$.
	\end{lem}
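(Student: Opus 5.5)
The plan is a direct computation from the definition \eqref{T} of $T$, followed by one monotonicity inequality coming from the resolvent step; Assumption \ref{asm11}(c) is then used to sign the resulting quadratic form.

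\textbf{Step 1 (expanding the left-hand side).} Let $\bar{\mathbf{z}}\in\Fix T$. By Lemma \ref{lem11}(a) there is $\bar x$ such that the associated point is $\bar{\mathbf{x}}=(\bar x,\dots,\bar x)$ and $\mathbf{M}^*\bar{\mathbf{x}}=0$. Given $\mathbf{z}$, let $\mathbf{x}$ be the point defined in \eqref{T}. Since $(\Id-T)(\mathbf{z})=\gamma\mathbf{M}^*(\mathbf{x}-\bar{\mathbf{x}})$ and $T(\mathbf{z})-\bar{\mathbf{z}}=(\mathbf{z}-\bar{\mathbf{z}})-\gamma\mathbf{M}^*(\mathbf{x}-\bar{\mathbf{x}})$, expanding the square gives
\[
\|T(\mathbf{z})-\bar{\mathbf{z}}\|^2+\tfrac{1-\gamma}{\gamma}\|(\Id-T)(\mathbf{z})\|^2
=\|\mathbf{z}-\bar{\mathbf{z}}\|^2-2\gamma\langle \mathbf{x}-\bar{\mathbf{x}},\mathbf{M}(\mathbf{z}-\bar{\mathbf{z}})\rangle+\gamma\langle \mathbf{x}-\bar{\mathbf{x}},\mathbf{M}\mathbf{M}^*(\mathbf{x}-\bar{\mathbf{x}})\rangle .
\]
This accounts for the $\mathbf{M}\mathbf{M}^*$ term in \eqref{1.2}. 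It remains to bound $-2\langle\Delta\mathbf{x},\mathbf{M}\Delta\mathbf{z}\rangle$ from above, where $\Delta\mathbf{x}=\mathbf{x}-\bar{\mathbf{x}}$ and $\Delta\mathbf{z}=\mathbf{z}-\bar{\mathbf{z}}$.

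\textbf{Step 2 (monotonicity of $\mathbf{A}$).} Writing both resolvent identities as inclusions in $\mathbf{A}$ and using monotonicity of $\mathbf{A}$, I get
\[
\Big\langle \Delta\mathbf{x},\ \mathbf{N}\Delta\mathbf{x}+\mathbf{M}\Delta\mathbf{z}-\mathbf{D}^{-1}\Delta\mathbf{x}-\mathbf{H}\,\Delta B-\mathbf{(P-Q)}\,\Delta C_R-\mathbf{Q}\,\Delta C_{P}\Big\rangle\ge 0,
\]
where $\Delta B=\mathbf{B}(\mathbf{Gx})-\mathbf{B}(\mathbf{G}\bar{\mathbf{x}})$, and $\Delta C_R$, $\Delta C_P$ are the analogous differences for $\mathbf{C}(\mathbf{R}\,\cdot)$ and $\mathbf{C}(\mathbf{P}^*\cdot)$.

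A key observation is that Assumption \ref{asm11}(b) gives $\mathbf{G}\bar{\mathbf{x}}=(\bar x,\dots,\bar x)$ and $\mathbf{R}\bar{\mathbf{x}}=\mathbf{P}^*\bar{\mathbf{x}}=(\bar x,\dots,\bar x)$. Hence $\mathbf{C}(\mathbf{R}\bar{\mathbf{x}})=\mathbf{C}(\mathbf{P}^*\bar{\mathbf{x}})$, and the reference points of all forward terms coincide. Also $2\langle\Delta\mathbf{x},\mathbf{N}\Delta\mathbf{x}\rangle=\langle\Delta\mathbf{x},(\mathbf{N}+\mathbf{N}^*)\Delta\mathbf{x}\rangle$.

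\textbf{Step 3 (bounding the forward terms; the main obstacle).} I treat the cocoercive and Lipschitz parts separately.

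For the cocoercive part, split $\mathbf{H}^*\Delta\mathbf{x}=\mathbf{G}\Delta\mathbf{x}+(\mathbf{H}^*-\mathbf{G})\Delta\mathbf{x}$. Lemma \ref{lemma2.1} gives $\langle\mathbf{G}\Delta\mathbf{x},\Delta B\rangle\ge\|\Delta B\|_{\boldsymbol\Sigma}^2$. Young's inequality in the $\boldsymbol\Sigma$-weighted norm then yields
\[
-\langle \Delta\mathbf{x},\mathbf{H}\Delta B\rangle\le \tfrac14\|(\mathbf{H}^*-\mathbf{G})\Delta\mathbf{x}\|^2_{\boldsymbol\Sigma^{-1}} .
\]
After doubling, this is $\tfrac12\langle\Delta\mathbf{x},(\mathbf{H}-\mathbf{G}^*)\boldsymbol\Sigma^{-1}(\mathbf{H}^*-\mathbf{G})\Delta\mathbf{x}\rangle$, exactly the $B$-term in \eqref{1.2}.

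For the Lipschitz part, rewrite
\[
(\mathbf{P}-\mathbf{Q})\Delta C_R+\mathbf{Q}\Delta C_P=\mathbf{P}\Delta C_P+(\mathbf{P}-\mathbf{Q})\big(\mathbf{C}(\mathbf{Rx})-\mathbf{C}(\mathbf{P}^*\mathbf{x})\big),
\]
which uses $\Delta C_R-\Delta C_P=\mathbf{C}(\mathbf{Rx})-\mathbf{C}(\mathbf{P}^*\mathbf{x})$. Monotonicity of each $C_j$ gives $\langle\mathbf{P}^*\Delta\mathbf{x},\Delta C_P\rangle\ge0$. For the remainder, apply Young's inequality with weights $\boldsymbol{L}$ and $\boldsymbol{L}^{-1}$, using Lemma \ref{lemma2.2} componentwise together with $\mathbf{R}\bar{\mathbf{x}}=\mathbf{P}^*\bar{\mathbf{x}}$:
\[
-2\big\langle(\mathbf{P}-\mathbf{Q})^*\Delta\mathbf{x},\ \mathbf{C}(\mathbf{Rx})-\mathbf{C}(\mathbf{P}^*\mathbf{x})\big\rangle\le \|(\mathbf{P}^*-\mathbf{Q}^*)\Delta\mathbf{x}\|^2_{\boldsymbol L}+\|(\mathbf{P}^*-\mathbf{R})\Delta\mathbf{x}\|^2_{\boldsymbol L}.
\]
Bookkeeping the weights per component $j$, so that $L_j$ rather than a global constant appears, is where I expect care to be needed. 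The Young split $2ab\le L_ja^2+L_j^{-1}b^2$ with $|b|\le L_j\|\cdot\|$ should give exactly the two $\boldsymbol L$-weighted terms in \eqref{1.2}.

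\textbf{Step 4 (conclusion).} Combining Steps 2 and 3 gives
\[
-2\langle\Delta\mathbf{x},\mathbf{M}\Delta\mathbf{z}\rangle\le\langle\Delta\mathbf{x},[\mathbf{N}+\mathbf{N}^*-2\mathbf{D}^{-1}+\tfrac12(\mathbf{H}-\mathbf{G}^*)\boldsymbol\Sigma^{-1}(\mathbf{H}^*-\mathbf{G})+(\mathbf{P}-\mathbf{Q})\boldsymbol L(\mathbf{P}^*-\mathbf{Q}^*)+(\mathbf{P}-\mathbf{R}^*)\boldsymbol L(\mathbf{P}^*-\mathbf{R})]\Delta\mathbf{x}\rangle .
\]
Substituting this into Step 1 yields \eqref{1.2}.

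For the last claim, use $K=2D^{-1}-N-N^\top$ from Remark \ref{rem11}(2). Assumption \ref{asm11}(c) makes the bracketed operator in \eqref{1.2} negative semidefinite, so the inner-product term is $\le0$. When $\gamma\in(0,1)$ we have $\delta=\frac{1-\gamma}{\gamma}>0$, and $\Fix T\neq\emptyset$ by Lemma \ref{lem11}, since the solution set is assumed nonempty. Hence $T$ is $\delta$-strongly quasi-nonexpansive.
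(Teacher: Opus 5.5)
Your proposal is correct and is essentially the paper's proof. It uses the same monotonicity inequality for $\mathbf{A}$, the same Young-plus-cocoercivity bound $\tfrac14\|(\mathbf{H}^*-\mathbf{G})\Delta\mathbf{x}\|^2_{\boldsymbol\Sigma^{-1}}$ for the $B$-term, and, for the $C$-terms, the same monotonicity inequality $\langle\mathbf{P}^*\Delta\mathbf{x},\mathbf{C}(\mathbf{P}^*\mathbf{x})-\mathbf{C}(\mathbf{P}^*\bar{\mathbf{x}})\rangle\ge0$ (via $\mathbf{R}\bar{\mathbf{x}}=\mathbf{P}^*\bar{\mathbf{x}}$) followed by a componentwise $\boldsymbol L$-weighted Young and Lipschitz estimate. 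The only differences are bookkeeping: you expand $\|T(\mathbf{z})-\bar{\mathbf{z}}\|^2+\frac{1-\gamma}{\gamma}\|(\Id-T)(\mathbf{z})\|^2$ directly instead of using \eqref{ceq} and adding and subtracting $\tfrac12\|\mathbf{M}^*\Delta\mathbf{x}\|^2$, and you regroup the $C$-terms as $\mathbf{P}\Delta C_P+(\mathbf{P}-\mathbf{Q})(\mathbf{C}(\mathbf{Rx})-\mathbf{C}(\mathbf{P}^*\mathbf{x}))$ rather than adding \eqref{middle}, which yields the same quantity $\langle(\mathbf{P}^*-\mathbf{Q}^*)\Delta\mathbf{x},\mathbf{C}(\mathbf{P}^*\mathbf{x})-\mathbf{C}(\mathbf{Rx})\rangle$.
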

	\begin{proof}
		Let $\mathbf{x}=J_{\mathbf{ DA}}(\mathbf{y})$ with	
		$\mathbf{y}=\mathbf{DNx+DMz-DHB(Gx)}-\mathbf{D(P-Q)}\mathbf{C(Rx)}-\mathbf{DQC(P^*x)}$ and $\mathbf{\bar{x}}=J_{\mathbf{ DA}}(\mathbf{\bar{y}})$ with
		$\mathbf{\bar{y}}=\mathbf{DN\bar{x}}+\mathbf{DM\bar{z}}-\mathbf{DHB(G\bar{x})}-\mathbf{D(P-Q)C(R\bar{x})}-\mathbf{DQC(P^*\bar{x})}$.  Since $\mathbf{D}^{-1}\mathbf{(y-x)}\in\mathbf{A}(\mathbf{x})$ and $\mathbf{D}^{-1}\mathbf{(\bar{y}}-\mathbf{\bar{x}})\in\mathbf{A}(\mathbf{\bar{x}})$, the monotonicity of $\mathbf{A}$ gives
		\begin{equation}\label{chang}
			\aligned
			0  \leq&\langle\mathbf{x}-\mathbf{\bar{x}},\mathbf{D}^{-1}(\mathbf{y}-\mathbf{x})-\mathbf{D}^{-1}(\mathbf{\bar{y}}-\mathbf{\bar{x}})\rangle \\
			=&\langle\mathbf{x}-\mathbf{\bar{x}},(\mathbf{Nx+Mz-HB(Gx)-(P-Q)C(Rx)-QC(P^*x)-D}^{-1}\mathbf{x}\rangle \\
			&-\langle\mathbf{x}-\mathbf{\bar{x}},(\mathbf{N\bar{x}+M\bar{z}-HB(G\bar{x})-(P-Q)C(R\bar{x})-QC(P^*\bar{x})-D}^{-1}\mathbf{\bar{x}}\rangle\\ =&\langle\mathbf{M}^{*}\mathbf{x}-\mathbf{M}^{*}\mathbf{\bar{x}
			},\mathbf{z}-\mathbf{\bar{z}}\rangle+\langle\mathbf{x}-\mathbf{\bar{x}},(\mathbf{N}-\mathbf{D}^{-1})\mathbf{x}-(\mathbf{N}-\mathbf{D}^{-1})\mathbf{\bar{x}}\rangle\\
			&-\left \langle \mathbf{x}-\mathbf{\bar{x}},\mathbf{HB(Gx)-HB(G\bar{x}}) \right \rangle\\
			&-\left \langle \mathbf{x}-\mathbf{\bar{x}},\mathbf{(P-Q)C(Rx)+QC(P^*x)-(P-Q)C(R\bar{x})-QC(P^*\bar{x}}) \right \rangle.\\
			\endaligned
		\end{equation}
		The first term on the RHS of  (\ref{chang}) can be expressed as
		\begin{equation}\label{101}
			\aligned
			\langle\mathbf{M}^{*}\mathbf{x}-\mathbf{M}^{*}\mathbf{\bar{x}}
			,\mathbf{z}-\mathbf{\bar{z}}\rangle
			=&\langle \frac{1}{\gamma}(\Id-T)(\mathbf{z})-\frac{1}{\gamma}(\Id-T)(\mathbf{\bar{z}}),
			\mathbf{z}-\mathbf{\bar{z}}\rangle \\
			=&\frac{1}{\gamma}\langle \mathbf{z}-T(\mathbf{z}),
			\mathbf{z}-\mathbf{\bar{z}}\rangle \\
			=&\frac{1}{2\gamma}\left(\|\mathbf{z}-\mathbf{\bar{z}}\|^{2}+\|(\Id-T)(\mathbf{z})\|^{2}-\|T(\mathbf{z})-\mathbf{\bar{z}}\|^{2}\right),
			\endaligned
		\end{equation}
		where the third equality comes from  \eqref{ceq}. The second term on the RHS of  (\ref{chang}) can be rewritten as
		\begin{equation}\label{102}
			\aligned
			&\langle\mathbf{x}  -\mathbf{\bar{x}},(\mathbf{N}-\mathbf{{D}}^{-1})\mathbf{x}-(\mathbf{N}-\mathbf{{D}}^{-1})\mathbf{\bar{x}}\rangle \\
			=&\frac{1}{2}\langle\mathbf{x}-\mathbf{\bar{x}},(\mathbf{M}\mathbf{M}^*+2\mathbf{N}-2\mathbf{D}^{-1})
			(\mathbf{x}-{\mathbf{\bar{x}}})\rangle-\frac{1 }{2}\|\mathbf{M}^*\mathbf{x}-\mathbf{M}^*\mathbf{\bar{x}}\|^2 \\	
			=&\frac{1}{2}\langle\mathbf{x}-\mathbf{\bar{x}},(\mathbf{M}\mathbf{M}^{*}+\mathbf{N}+\mathbf{N}^*-2\mathbf{D}^{-1})(\mathbf{x}-\mathbf{\bar{x}})\rangle -\frac{1}{2\gamma^2}\|(\Id-T)(\mathbf{z})\|^2.
			\endaligned
		\end{equation}
		By	Young's inequality  and Lemma \ref{lemma2.1},  the third term on the RHS of  (\ref{chang}) can be recast as
		\begin{equation}\label{103}
			\aligned
			&-\left \langle \mathbf{x}-\mathbf{\bar{x}},\mathbf{HB(Gx)-HB(G\bar{x}}) \right \rangle\\
			=&-\left \langle \mathbf{H^*}(\mathbf{x}-\mathbf{\bar{x}}),\mathbf{B(Gx)-B(G\bar{x}}) \right \rangle\\
			=&-\left \langle (\mathbf{H^*-G})(\mathbf{x}-\mathbf{\bar{x}}),\mathbf{B(Gx)-B(G\bar{x}})\right \rangle - \left \langle \mathbf{G}(\mathbf{x})-\mathbf{G}(\mathbf{\bar{x}}),\mathbf{B(Gx)-B(G\bar{x}}) \right \rangle\\
			\leq&\frac{1}{4}\|(\mathbf{H^*-G})(\mathbf{x}-\mathbf{\bar{x}})\|^2_{\mathbf{\Sigma}^{-1}}+\|\mathbf{B(Gx)-B(G\bar{x}})\|^2_{\mathbf{\Sigma}}-\|\mathbf{B(Gx)-B(G\bar{x}})\|^2_{\mathbf{\Sigma}}\\
			=&\frac{1}{4}\|(\mathbf{H^*-G})(\mathbf{x}-\mathbf{\bar{x}})\|^2_{\mathbf{\Sigma}^{-1}}
			=\frac{1}{4}\langle (\mathbf{H-G^*})\mathbf{\Sigma}^{-1}(\mathbf{H^*-G})(\mathbf{x}-\mathbf{\bar{x}}),\mathbf{x}-\mathbf{\bar{x}}\rangle.
			\endaligned
		\end{equation}
		By $\bar{\mathbf{z}}\in\Fix T$ and the definition of $\mathbf{\bar x}$,	it follows from Assumption \ref{asm11}(b) that $\mathbf{P^*\bar x=R\bar x}$.	Using  the monotonicity of $\mathbf{C}$, we can get
		\begin{equation}\label{middle}
			\begin{aligned}
				0&\leq\left \langle \mathbf{P^*}(\mathbf{x}-\mathbf{\bar{x}}),\mathbf{C(P^*x)-C(P^*\bar{x})}\right \rangle\\
				&=\left \langle \mathbf{P^*}(\mathbf{x}-\mathbf{\bar{x}}),\mathbf{C(P^*x)-C(Rx)}\right \rangle+\left \langle \mathbf{P^*}(\mathbf{x}-\mathbf{\bar{x}}),\mathbf{C(Rx)-C(P^*\bar{x})}\right \rangle\\
				&=\left \langle \mathbf{P^*}(\mathbf{x}-\mathbf{\bar{x}}),\mathbf{C(P^*x)-C(Rx)}\right \rangle+\left \langle \mathbf{P^*}(\mathbf{x}-\mathbf{\bar{x}}),\mathbf{C(Rx)-C(R\bar{x})}\right \rangle.
			\end{aligned}
		\end{equation}
		To estimate the last term on the RHS of  (\ref{chang}), combining Lemma \ref{lemma2.2} and \eqref{middle} gives
		\begin{equation}\label{104}
			\aligned
			&-\left \langle \mathbf{x}-\mathbf{\bar{x}},\mathbf{(P-Q)C(Rx)+QC(P^*x)-(P-Q)C(R\bar{x})-QC(P^*\bar{x}}) \right \rangle.\\	
			=&	\left \langle \mathbf{Q^*}(\mathbf{x}-\mathbf{\bar{x}}),
			\mathbf{C(Rx)-C(R\bar{x})} \right \rangle
			-\left \langle \mathbf{P^*}(\mathbf{x}-\mathbf{\bar{x}}),
			\mathbf{C(Rx)-C(R\bar{x})} \right \rangle\\
			&-\left \langle \mathbf{Q^*}(\mathbf{x}-\mathbf{\bar{x}}),
			\mathbf{C(P^*x)-C(P^*\bar{x})} \right \rangle\\
			\leq&	\left \langle \mathbf{Q^*}(\mathbf{x}-\mathbf{\bar{x}}),
			\mathbf{C(Rx)}-\mathbf{C(P^*x)}
			\right \rangle
			+\left \langle \mathbf{P^*}(\mathbf{x}-\mathbf{\bar{x}}),
			\mathbf{C(P^*x)-C(Rx)} \right \rangle
			\\
			=&	\left \langle (\mathbf{P^*-Q^*})(\mathbf{x}-\mathbf{\bar{x}}),\mathbf{C(P^*x)-C(Rx)} \right \rangle\\
			\leq& \frac{1}{2}\|(\mathbf{P^*-Q^*})(\mathbf{x}-\mathbf{\bar{x}})\|^2_{\boldsymbol{L}} + \frac{1}{2}\|\mathbf{C(P^*x)-C(Rx)}\|^2_{\boldsymbol{L}^{-1}}\\
			\leq& \frac{1}{2}\|(\mathbf{P^*-Q^*})(\mathbf{x}-\mathbf{\bar{x}})\|^2_{\boldsymbol{L}}+ \frac{1}{2}\|\mathbf{P^*x-Rx}\|^2_{\boldsymbol{L}}\\
			=&\frac{1}{2}\langle \mathbf{[(P-Q){\boldsymbol{L}}(P^*-Q^*)+(P-R^{*})\boldsymbol{L}(P^*-R)](x-\bar{x}),x-\bar{x}}\rangle.
			\endaligned
		\end{equation}
		Substituting (\ref{101}), (\ref{102}), (\ref{103}) and (\ref{104}) into \eqref{chang}, followed by multiplying by $2\gamma$, gives (\ref{1.2}). In particular, if Assumption \ref{asm11}(c) holds, then the inner-product on the RHS of \eqref{1.2} is negative and hence $T$ is $\delta$-strongly quasi-nonexpansive whenever $\gamma\in(0,1)$.
	\end{proof}
	The following theorem is our main result regarding the convergence of Algorithm \ref{AlgG}.

	\begin{theorem}\label{theo}
		{
			Suppose that Assumption \ref{asm11}  holds and let $\gamma_k\in[0,1]$ such that $\lim\inf_{k\rightarrow\infty}\gamma_k(1-\gamma_k)>0$. Let the sequences $\{\mathbf{z}^k\} $ and $\{\mathbf{x}^k\}$ be generated by Algorithm \ref{AlgG}. Then the following assertions hold:}
		\begin{itemize}
			\item[{\rm(i)}]   We have $(\Id-T)(\mathbf{z}^k)\to 0$  and $\|\frac{1}{k+1}\sum_{s=0}^{k}(\Id-T)(z^s)\|=O(\frac{1}{\sqrt{k}})$ as $k\to \infty$. It holds $\sum_{i=1}^{n}t_ix_i^k\to 0$ for all $(t_1,\ldots,t_n)\in \mathbb{R}^n$  with $\sum_{i=1}^{n}t_i=0$ as $k\to \infty$.
			\item[{\rm(ii)}] The sequence $\{\mathbf{z}^k\}$ converges weakly to a point $\bar{\mathbf{z}}\in\Fix T$.
			\item[{\rm(iii)}]  The sequence $\{\mathbf{x}^k\}$ converges weakly to $(\bar{x},\dots,\bar{x}) \in \mathcal{H}^n$, where
			$\bar{x}=J_{{d_1}A_1}({d_1}\mathbf{M}_1\mathbf{\bar{z}})$ $\in\zer\left( \sum_{i=1}^{n}A_i + \sum_{i=1}^{m}B_i+ \sum_{i=l}^{l}C_i \right)$ and $d_1M_1$ denotes the first row of the matrix $DM$.
		\end{itemize}					
	\end{theorem}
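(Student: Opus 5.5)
We plan to follow the standard strategy for frugal resolvent splittings, as in \cite{Anton,minh}: first obtain Fejér monotonicity and asymptotic regularity from Lemma \ref{lem22} together with Lemma \ref{per1}, then identify weak cluster points through a demiclosedness-type argument on a maximally monotone operator in the product space.

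\textbf{Step 1: part (i).} We fix $\gamma\in(0,1)$ and write the iteration as $\mathbf{z}^{k+1}=(1-\lambda_k)\mathbf{z}^k+\lambda_kT\mathbf{z}^k$ with $\lambda_k=\gamma_k/\gamma$. By Lemma \ref{lem22}, $T$ is $\delta$-strongly quasi-nonexpansive with $\delta=\frac{1-\gamma}{\gamma}$, and $\Fix T\neq\emptyset$ by Lemma \ref{lem11}. One checks that $\lambda_k(1-\lambda_k+\delta)=\frac{\gamma_k}{\gamma}\cdot\frac{1-\gamma_k}{\gamma}$. This lies in $[0,1+\delta]$ and has positive liminf by hypothesis. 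Lemma \ref{per1} then gives Fejér monotonicity, $(\Id-T)\mathbf{z}^k\to0$, and the $O(1/\sqrt{k})$ ergodic rate. Since $(\Id-T)\mathbf{z}^k=\gamma\mathbf{M}^*\mathbf{x}^k$, we get $\mathbf{M}^*\mathbf{x}^k\to0$. Assumption \ref{asm11}(a) gives $\range M=e_n^\perp$, so any $t$ with $\sum t_i=0$ can be written as $t=Ms$. Hence $\sum_i t_ix_i^k=\langle s,\mathbf{M}^*\mathbf{x}^k\rangle\to0$.

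\textbf{Step 2: boundedness of $\{\mathbf{x}^k\}$ and strong convergence of differences.} Boundedness of $\{\mathbf{x}^k\}$ should follow by induction on $i$. Each $x_i^k$ is a resolvent, which is nonexpansive, applied to an expression built from $\mathbf{z}^k$ (bounded by Fejér monotonicity) and from $x_1^k,\dots,x_{i-1}^k$ through Lipschitz maps; the strict lower-triangular structure of Remark \ref{rem11}(3) makes this induction valid. Moreover, from Step 1 we have $x_i^k-x_j^k\to0$ for all $i,j$, and hence $\mathbf{G}\mathbf{x}^k-(G e_n\otimes x_1^k)\to 0$, and similarly for $\mathbf{R}\mathbf{x}^k$ and $\mathbf{P}^*\mathbf{x}^k$, using the row and column sum conditions in Assumption \ref{asm11}(b).

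\textbf{Step 3: identifying cluster points (the main obstacle).} Take a subsequence with $\mathbf{z}^{k_j}\rightharpoonup\hat{\mathbf{z}}$ and $\mathbf{x}^{k_j}\rightharpoonup(\hat x,\dots,\hat x)$. The difficulty is that $\mathbf{B}$ and $\mathbf{C}$ are nonlinear, so they cannot pass weak limits directly. Following \cite{Anton}, we will use the inclusion
\[
\mathbf{D}^{-1}(\mathbf{y}^k-\mathbf{x}^k)\in\mathbf{A}(\mathbf{x}^k)
\]
and rewrite it as an inclusion for one maximally monotone operator on $\mathcal{H}\times\mathcal{H}^{n-1}$:
\[
\mathcal{S}(x,\mathbf{v})=\bigl(A_1+\textstyle\sum_i B_i+\sum_i C_i\bigr)(x)\times\prod_{i\ge2}\bigl(A_i^{-1}(v_i)-x\bigr),
\]
or a variant of it. Here $\mathbf{v}^k$ collects the elements of $A_i(x_i^k)$ for $i\ge2$, and the error terms (differences $B_j(\cdot)-B_j(x_1^k)$ and $C_j(\cdot)-C_j(x_1^k)$) tend to zero strongly by the Lipschitz continuity from Step 2. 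The operator $\mathcal{S}$ is maximally monotone, being the sum of a maximally monotone operator and a skew linear map plus continuous monotone terms, so its graph is sequentially closed in weak$\times$strong. Passing to the limit gives $0\in\mathcal{S}(\hat x,\hat{\mathbf{v}})$, i.e. $\hat x\in\zer(\sum A_i+\sum B_i+\sum C_i)$. It also gives $\mathbf{D}^{-1}(\hat{\mathbf{y}}-\hat{\mathbf{x}})\in\mathbf{A}\hat{\mathbf{x}}$ with $\hat{\mathbf{y}}$ built from $\hat{\mathbf{z}}$, so $(\hat{\mathbf{z}},\hat x)\in\Omega$ and $\hat{\mathbf{z}}\in\Fix T$ by Lemma \ref{lem11}(c). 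Lemma \ref{lle} then yields (ii).

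\textbf{Step 4: part (iii).} For (iii), boundedness shows that every weak cluster point of $\{\mathbf{x}^k\}$ has the form $(\hat x,\dots,\hat x)$ with $(\bar{\mathbf{z}},\hat x)\in\Omega$. At the first coordinate no forward terms appear (strict lower triangularity), so $\hat x=J_{d_1A_1}(d_1\mathbf{M}_1\bar{\mathbf{z}})$ is uniquely determined. Hence the whole sequence converges weakly to $(\bar x,\dots,\bar x)$.
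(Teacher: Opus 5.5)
\textbf{There is a genuine gap in Step 3; Steps 1 and 2 are fine.} Your induction for the boundedness of $\{\mathbf{x}^k\}$ is a valid alternative to the paper's argument via $x_1^k=J_{d_1A_1}(d_1\mathbf{M}_1\mathbf{z}^k)$ and (i). The gap is at the sentence ``It also gives $\mathbf{D}^{-1}(\hat{\mathbf{y}}-\hat{\mathbf{x}})\in\mathbf{A}\hat{\mathbf{x}}$ with $\hat{\mathbf{y}}$ built from $\hat{\mathbf{z}}$''. (As displayed, $\mathcal{S}$ is also missing the coupling term $+\sum_{i\ge2}v_i$ in its first block; without it the linear part is not skew. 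I assume that is your ``variant''.)

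In your $\mathcal{S}$, block $i\ge2$ is the bare $A_i^{-1}$ evaluated at $v_i^k=\frac1{d_i}(y_i^k-x_i^k)$. Unwinding $\mathbf{y}^k$ gives
\[
v_i^k=\ell_i^k-\sum_{j=1}^m H_{ij}B_j(x_1^k)-\sum_{j=1}^l P_{ij}C_j(x_1^k)+o(1),\qquad \ell_i^k:=(\mathbf{M}\mathbf{z}^k)_i+(\mathbf{N}\mathbf{x}^k)_i-\tfrac{1}{d_i}x_i^k .
\]
Along your subsequence, $\ell_i^k\rightharpoonup\ell_i:=(\mathbf{M}\hat{\mathbf{z}})_i+(\mathbf{N}\hat{\mathbf{x}})_i-\frac1{d_i}\hat x$. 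However, $B_j(x_1^k)$ and $C_j(x_1^k)$ only have weak cluster points $\beta_j,\kappa_j$. In infinite dimensions nothing forces $\beta_j=B_j(\hat x)$ or $\kappa_j=C_j(\hat x)$, because nonlinear Lipschitz (even cocoercive) maps are not weakly sequentially continuous.

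Weak--strong closedness of $\mathcal{S}$ therefore only yields
\[
\ell_i-\textstyle\sum_jH_{ij}\beta_j-\sum_jP_{ij}\kappa_j\in A_i(\hat x)\ (i\ge2),\qquad \ell_1+\sum_j(\beta_j-B_j(\hat x))+\sum_j(\kappa_j-C_j(\hat x))\in A_1(\hat x).
\]
This is enough for $\hat x\in\zer(\sum_iA_i+\sum_jB_j+\sum_jC_j)$. But $(\hat{\mathbf{z}},\hat x)\in\Omega$ requires $\ell_i-\sum_jH_{ij}B_j(\hat x)-\sum_jP_{ij}C_j(\hat x)\in A_i(\hat x)$ for every $i$. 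That is exactly the information lost by lumping all forward operators into one block.

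The consequences are:
\begin{itemize}
\item $\{\mathbf{z}^k\}$ is Fej\'er monotone only with respect to $\Fix T$, so Lemma \ref{lle} needs $\hat{\mathbf{z}}\in\Fix T$. Hence (ii) is not proved.
\item Step 4 inherits the gap, because $\hat x=J_{d_1A_1}(d_1\mathbf{M}_1\hat{\mathbf{z}})$ is precisely the inclusion $\ell_1\in A_1(\hat x)$.
\end{itemize}

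\textbf{How the paper avoids this.} It keeps the forward terms attached to their own block. In $\Phi$ of \eqref{phi}, block $i$ is $A_i+\sum_jH_{ij}B_j+\sum_jP_{ij}C_j$ (inverted for $i\le n-1$, with skew coupling through $x_n$). The graph pairs used are $\bigl(x_i^k,\frac1{d_i}(w_i^k-x_i^k)+a_i^k+b_i^k\bigr)$, where $\mathbf{w}^k=\mathbf{DM}\mathbf{z}^k+\mathbf{DN}\mathbf{x}^k$ and $a_i^k,b_i^k\to0$ strongly (they come from re-evaluating $B_j,C_j$ at $x_i^k$). The dual variable is then affine in $(\mathbf{z}^k,\mathbf{x}^k)$ up to a strongly null error. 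Its weak limit is therefore explicitly $\frac1{d_i}(w_i-\hat x)$, and the limiting inclusion is exactly $(\hat{\mathbf{z}},\hat x)\in\Omega$. This needs each block to be maximally monotone, which holds, e.g., when $H$ and $P$ are entrywise nonnegative.

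\textbf{How to repair your route without lumping.} Treat $A_1,\dots,A_n,B_1,\dots,B_m,C_1,\dots,C_l$ as separate maximally monotone blocks, with graph pairs $(x_i^k,v_i^k)$, $\bigl((\mathbf{G}\mathbf{x}^k)_j,B_j((\mathbf{G}\mathbf{x}^k)_j)\bigr)$ and $\bigl((\mathbf{P}^*\mathbf{x}^k)_j,C_j((\mathbf{P}^*\mathbf{x}^k)_j)\bigr)$. The values $C_j((\mathbf{R}\mathbf{x}^k)_j)$ differ from the latter by $o(1)$.
\begin{itemize}
\item All primal points differ from each other by $o(1)$.
\item Since $M^\top e_n=0$, $H^\top e_n=e_m$, $(P-Q)^\top e_n=0$, $Q^\top e_n=e_l$ and $e_n^\top(D^{-1}-N)e_n=0$, the sum of all dual points tends to $0$ strongly.
\item Decompose the stacked primal sequence $\mathbf{p}^k$ and dual sequence $\mathbf{q}^k$ along the consensus subspace and its orthogonal complement. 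This gives $\langle\mathbf{p}^k-\mathbf{p},\mathbf{q}^k-\mathbf{q}\rangle\to0$.
\item Weak--weak closedness of the maximally monotone product operator under this condition identifies $\beta_j=B_j(\hat x)$, $\kappa_j=C_j(\hat x)$ and $\hat v_i\in A_i(\hat x)$, i.e.\ $(\hat{\mathbf{z}},\hat x)\in\Omega$.
\end{itemize}
This version needs no sign condition on $H$ or $P$.
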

	\begin{proof}
		{\rm(i)} Since $\zer\left(\sum_{i=1}^{n}A_i+\sum_{i=1}^{m}B_i+\sum_{i=1}^{l}C_i\right)\ne \emptyset$, Lemma
		\ref{lem11} implies $\Fix T\ne \emptyset$.
		Let $\gamma\in(0,1)$ in \eqref{T}, then	Lemma \ref{lem22} implies that $T$ is $\delta$-strongly quasi-nonexpansive for $\delta=\frac{1-\gamma}{\gamma}$. It is easy to verify that $\frac{\gamma_k}\gamma\in[0,1+\delta]$ such that  $\lim\inf_{k\rightarrow\infty}\frac{\gamma_k}\gamma(1-\frac{\gamma_k}\gamma+\delta)>0$. Then using \eqref{Td1} and Lemma \ref{per1}, we obtain that $\{z^k\}$
		is Fej\'{e}r monotone with respect to $\Fix T$  as well as $(\Id-T)(z^{k})\to0$ and $\|\frac{1}{k+1}\sum_{s=0}^{k}(\Id-T)(z^s)\|=O(\frac{1}{\sqrt{k}})$  as $k\to\infty$. Furthermore, it is also concluded that $\{z^k\}$ is bounded.
		
		Next, let $t\in\{t_i\in\mathbb{R}^n:\sum_{i=1}^{n}t_i=0\}=(\ker(M^\top))^\perp=\range M$. Then, there exists $v\in\mathbb{R}^{n-1}$ such that $t=-Mv$ and hence
		\begin{equation*}
			\begin{aligned}
				\sum_{i=1}^{n}t_ix_i^k&=\left( t^\top\otimes \Id\right)\mathbf{x}^k=-((v^{\top}M^{\top})\otimes\Id)\mathbf{x}^k\\
				&=-\left( v^\top\otimes \Id\right)\mathbf{M^*x}^k=\frac{1}{\gamma}\left( v^\top\otimes \Id\right)((\Id-T)(\mathbf{z}^k))\to 0,\quad \hbox{as}\,\, k\to\infty.
			\end{aligned}
		\end{equation*}
		
		{(ii) and \rm(iii)}  Let $\mathbf{x}^k=J_{ \mathbf{DA}}(\mathbf{y}^k)$ where $\mathbf{y}^k=\mathbf{DNx}^k+\mathbf{DMz}^k-\mathbf{DHB(Gx}^k)-\mathbf{D(P-Q)C(Rx}^k)-\mathbf{DQC(P^*x}^k).$ We claim that the sequence $\{\mathbf{x}^k\}$ is bounded. To see this, it follows from Assumption \ref{asm11}(b) and (2) of Remark \ref{rem11}  that
		\begin{equation*}
			x_1^k=J_{d_1A_1}(y_1^k)=J_{d_1A_1}(d_1\mathbf{M}_1\mathbf{z}^k).
		\end{equation*}
		By the nonexpansivity of resolvents and boundedness of $\{\mathbf{z}^k\}$, we get that $\{x^k_1\}$ is  bounded. Using (i), we have that $\{\mathbf{x}^k\}$ is bounded, as claimed.
		Let $ \mathbf{\bar{z}}=(\bar{z}_1,\dots,\bar{z}_{n-1})\in\mathcal{H}^{n-1}$ be an arbitrary weak cluster point of $\{\mathbf{z}^k\}$. Then there exists a point $\mathbf{\bar{x}}\in\mathcal{H}^n$ such that
		$(\mathbf{\bar{z}},\mathbf{\bar{x}})$ is a weak cluster point of $\{(\mathbf{z}^k,\mathbf{x}^k)\}$, where $\mathbf{\bar{x}}=(\bar{x},\dots,\bar{x})$ according to (i).
		Denote $\mathbf{w}^k=\mathbf{DM}\mathbf{z}^k+\mathbf{DN}\mathbf{x}^k$. Then $\mathbf{y}^k=\mathbf{w}^k-\mathbf{DHB(Gx}^k)-\mathbf{D(P-Q)C(Rx}^k)-\mathbf{DQC(P^*x}^k)$. We have that $\mathbf{w}=\mathbf{DM}\mathbf{\bar{z}}+\mathbf{DN\bar{x}}$ is a weak cluster point of $\{\mathbf{w}^k\}$ and let $\mathbf{w}=(w_1,\ldots,w_{n})$. Using  $\range M=\{t_i\in\mathbb{R}^n:\sum_{i=1}^{n}t_i=0\}$,  we deduce
		$\sum_{i=1}^{n}\frac{w_i^k}{d_i}=\sum_{i,j=1}^{n}N_{ij}x_j^k$ from $\mathbf{D}^{-1}\mathbf{w}^k=\mathbf{M}\mathbf{z}^k+\mathbf{N}\mathbf{x}^k$. Define
		the operator $\Phi: \mathcal{H}^n\rightrightarrows \mathcal{H}^n$ is defined by
		\begin{equation}\label{phi}
			\begin{aligned}
				\Phi= &\begin{pmatrix}
					A_1^{-1}\\
					(A_2+{\textstyle \sum_{j=1}^{m}}H_{2j} B_j+\sum_{j=1}^{l}Q_{2j}C_j+\sum_{j=1}^{l}(P-Q)_{2j}C_j)^{-1}\\
					\vdots \\
					(A_{n-1}+{\textstyle \sum_{j=1}^{m}}H_{n-1,j} B_{j}+\sum_{j=1}^{l}Q_{n-1,j}C_j+\sum_{j=1}^{l}(P-Q)_{n-1,j}C_j)^{-1}\\
					A_n+{\textstyle \sum_{j=1}^{m}}H_{nj} B_{j}+\sum_{j=1}^{l}Q_{nj}C_j+\sum_{j=1}^{l}(P-Q)_{nj}C_j
				\end{pmatrix}\\
				&+\begin{pmatrix}
					0 & 0 & 0  &\cdots  &0  & -\Id\\
					0 & 0 & 0 &\cdots  &0  & -\Id \\
					\vdots&\vdots   & \vdots  &\ddots &\vdots   &\vdots  \\
					0 & 0 & 0  &\cdots  &0  & -\Id \\
					\Id& \Id &\Id&\cdots & \Id &0
				\end{pmatrix}.
			\end{aligned}
		\end{equation}  Then, from \eqref{main-scheme}, we get
		\begin{equation}\label{FT}
			\aligned
			\Phi&\begin{pmatrix}\frac{1}{d_1}( w_1^k-x_1^k)\\\frac{1}{d_2}(w_2^k-x_2^k)+a^k_2+ b_2^k \\ \vdots \\\frac{1}{d_{n-1}}( w_{n-1}^k-x_{n-1}^k)+ a^{k}_{n-1}+b_{n-1}^k\\x_n^k\end{pmatrix}
			\ni \begin{pmatrix}x_1^k-x_n^k \\ x_2^k-x_n^k\\ \vdots \\ x_{n-1}^k-x_n^k\\ {\textstyle \sum_{i=1}^{n}}\frac{1}{d_i} (w_i^k-x_i^k) + {\textstyle \sum_{i=2}^{n}}(a^k_i+b_i^k) \end{pmatrix}\\
			&\qquad\qquad\qquad\qquad=\begin{pmatrix} x_1^k-x_n^k \\ x_2^k-x_n^k\\ \vdots \\ x_{n-1}^k-x_n^k\\ {\textstyle \sum_{i,j=1}^{n}}N_{ij}x_j^k-{\textstyle \sum_{i=1}^{n}}\frac{1}{d_i} x_i^k +  {\textstyle \sum_{i=2}^{n}}(a^k_i+b_i^k)\end{pmatrix},
			\endaligned
		\end{equation}
		where $a_i^k={\textstyle \sum_{j=1}^{m}}H_{ij} B_{j}(x_i^k)-{\textstyle \sum_{j=1}^{m}}H_{ij} B_{j}( {\textstyle \sum_{h=1}^{i-1}}G_{jh}x_h^k) $  and
		$b_i^k=\sum_{j=1}^{l}Q_{ij}C_j(x_i^k)+\sum_{j=1}^{l}(P-Q)_{ij}C_j(x_i^k)-\sum_{j=1}^{l}Q_{ij}C_j(\sum_{h=1}^{i-1}P_{hj}x_h^k)-\sum_{j=1}^{l}(P-Q)_{ij}C_j(\sum_{h=1}^{i-1}R_{jh}x_h^k)$ for $i=2,\ldots,n$.
		According to (i) and Lipschitz continuity  of $B_i$ (for $i=1,\dots,m$) and $C_j$ (for $j=1,\dots,l$),  we have $\lim_{k \to \infty} a_i^k=0$ and $\lim_{k \to \infty} b_i^k=0$ for $i=2,\ldots,n.$	As the sum of two maximally monotone operators is again maximally monotone provided that one of the operators has full domain \cite[Corollary 24.4 (i)]{BC2011}, it follows that $\Phi$ is maximally monotone. Consequently, its graph is sequentially closed in the weak-strong topology  \cite[Proposition 20.32]{BC2011}.  Note also that the RHS of (\ref{FT}) converges strongly to zero as a consequence of (i) and (a) of Remark \ref{rem11}(2). Taking the limit  along a subsequence of $\{(\mathbf{z}^k,\mathbf{x}^k)\}$ which converges weakly to $(\mathbf{\bar{z}},\mathbf{\bar{x}})$ in (\ref{FT}), and using the weak-strong topology of $\Phi$, we obtain
		$$\Phi
		\begin{pmatrix}
			\frac{1}{d_1}(w_1-\bar{x})\\
			\vdots \\
			\frac{1}{d_{n-1}}(w_{n-1}-\bar{x})\\
			\bar{x}
		\end{pmatrix}\ni \begin{pmatrix}
			0 \\
			\vdots \\
			0 \\
			0
		\end{pmatrix}.$$ Then,  from the definition of $\Phi$ in \eqref{phi}, it follows
		\begin{equation*}\label{phi0}
			\left\{\begin{array}{l}
				A_{1}(\bar{x}) \ni \frac{1}{d_1}(w_1-\bar{x} ),\\
				(A_{i}+{\textstyle \sum_{j=1}^{m}}H_{ij} B_{j}+\sum_{j=1}^{l}Q_{ij}C_j+\sum_{j=1}^{l}(P-Q)_{ij}C_j)(\bar{x})\ni \frac{1}{d_i}(w_i-\bar{x}),\\ ~~~~~~~~~~~~~~~~~~~~~~~~~~~~~~~~~~~~~~~~~~~~~~~~~~~~~~~~~~~~~~~~~~~~~~~~~~~~\forall i \in[2, n-1],\\
				(A_n+{\textstyle \sum_{j=1}^{m}}H_{nj} B_{j}+\sum_{j=1}^{l}Q_{nj}C_j+\sum_{j=1}^{l}(P-Q)_{nj}C_j)(\bar{x}) \ni-\sum_{i=1}^{n-1}\frac{1}{d_i}(w_{i}-\bar{x}),
			\end{array}\right.
		\end{equation*}
		which implies $(\mathbf{\bar z},x)\in\Omega$.
		It follows  $\mathbf{\bar{z}}\in\Fix T$ and $\bar{x}\in\zer\big ( \sum_{i=1}^{n}A_i+\sum_{i=1}^{m}B_i+ \sum_{i=1}^{l}C_i  \big )$ from Lemma \ref{lem11}(c). Hence $\omega_w(\mathbf{{z}}^k)\subseteq\Fix(T)$. Using Lemma \ref{lle}, we get that $\{\mathbf{z}^k\}$ converges  weakly to a point in $\Fix T$. Since	  $\bar{x}= J_{{d_1}A_1}(w_1)=J_{{d_1}A_1}({d_1}\mathbf{M}_1\mathbf{\bar{z}})$, $\mathbf{\bar{x}}=(\bar{x},\dots,\bar{x})$  is the unique weak sequential cluster point of the  sequence $\{\mathbf{x}^k\}$. Therefore  $\{\mathbf{x}^k\}$ converges  weakly to $\mathbf{\bar{x}}$.	
	\end{proof}

	\section{Numerical experiment}
	\label{Sect5}
	
	In this section, we first conduct tests on the selections of  parameters and present some heuristic strategies and further propose a new algorithm based on these strategies. Then we  compare the proposed algorithm with three methods in \cite{FBWR} through a numerical experiment.
	
	\subsection{Setting and selection of parameters}\label{test_que}
	In this subsection, we present a series of numerical
	experiments to investigate the influence of the matrices $M,U, H,G,P,Q,R$ on the performance of
	Algorithm \ref{AlgG}, and we provide suggestions for their selections to achieve good performance.

	\subsubsection{Problem description}\label{qu}
	For $d,n,m,l \in \mathbb{N}$, $0 \leq \delta_1 \leq \delta_2$, a matrix $\Psi \in \mathbb{R}^{m\times d}$, a vector $y \in \mathbb{R}^m$, a sample of points $\{\xi_i\}_{i=1}^n \subset \mathbb{R}^d$, we consider the following convex optimization problem:
	\begin{equation}\label{Te}
		\min_{x \in \mathbb{R}^d} f(x)= \|x - \xi_1\| + \dots + \|x - \xi_n\| + H_{\delta_1,\delta_2}(\Psi x - y)+\frac{1}{2}x^\top \Theta_{1}x+ \dots+\frac{1}{2}x^\top \Theta_{l}x,
	\end{equation}
	where 	 $\{\Theta_i\}_{i=1}^l\subset \mathbb{R}^{d\times d}$ is a randomly generated  matrix,  which is symmetric but not negative definite, and $H_{\delta_1,\delta_2} : \mathbb{R}^m \to \mathbb{R}$ is a Huber-like smooth function defined for all $z := (z_1,\dots,z_m) \in \mathbb{R}^m$ by:
	\[
	H_{\delta_1,\delta_2}(z) := \sum_{i=1}^m h_{\delta_1,\delta_2}(z_i), \quad
	h_{\delta_1,\delta_2}(z_i) :=
	\begin{cases}
		0 & \text{if } |z_i| \leq \delta_1, \\
		\frac12 (z_i - \delta_1)^2 & \text{if } |z_i| \in [\delta_1, \delta_2], \\
		(\delta_2 - \delta_1)|z_i| - \frac12 (\delta_2^2 - \delta_1^2) & \text{else}.
	\end{cases}
	\]
	The first-order optimality condition implies that the problem \eqref{Te} is equivalently expressed as the inclusion problem \eqref{ABC}. To this end,		we take
	$
	B_i(x) := h'_{\delta_1,\delta_2}(\Psi_i x - y_i) \Psi_i^\top
	$
	for all $i \in \{1,\ldots,m\}$,
	where \(\Psi_i\) is the \(i\)-th row of \(\Psi\). Since	the function \(h_{\delta_1,\delta_2}\) is differentiable with a 1-Lipschitz continuous gradient, we have \(\sigma_i = \frac{1}{\|\Psi_i\|_2^2}\), which can be computed directly.
	We  take  $C_i(x):=\Theta_{i}x$ with $L_i=\|\Theta_{i}\|_2$, $i \in \{1,\ldots,l\}$. Regarding the nonsmooth terms, we consider \(A_i := \partial g_i\) with \(g_i(x) := \|x - \xi_i\|\) for all $i \in \{1,\ldots,n\}$. Note that for all \(\tau > 0\), \(J_{\tau A_i}\) coincides with \(\mathrm{prox}_{\tau g_i}\), which admits a simple closed-form expression via a standard soft-thresholding operation.

	\subsubsection{Algorithm design}\label{can}
	To test the influence of the  matrices, we generate instances of $M,H,G,P,Q,R,K$,  which meet Assumption \ref{asm11}. Then  we present three heuristics based on the numerical performances.
	For ease of later use, we denote by $\mathcal{U}(I)$ the uniform distribution on the interval $I \subset\mathbb{R}$.
	
	We take $n=20,m=15,l=10,d=2,\delta_1=1,\delta_2=2$ and sample  $\Psi$ from  $\mathcal{U}(-2.5,2.5)^{20\times2}$.  The elements of $y$ and $\xi_i$, $i=1,\ldots,n$ are randomly sampled from $\mathcal{U}(0,1)$ and $\mathcal{N}(0,5^2)$, respectively. We  take $\Theta_i=\overline{\Theta}_{i}^T\overline{\Theta}_{i}$, $i=1,\ldots,l$, where $\overline{\Theta}_{i}\in\mathcal{U}(0,1)^{2\times2}$.  In all experiments, the initial values are set to zero.
	\vskip 2mm
	
	\noindent
	\textbf{(i) The random choice of $H,G,P,Q,R$}
	\vskip 2mm
	
	\noindent
	To generate general $H,~G^\top\in\mathbb{R}^{n\times m},$ and $P,~Q,~R^\top\in\mathbb{R}^{n\times l},$  we first sample $\widetilde H,$  $\widetilde G^\top$, $\widetilde P,~\widetilde Q,~\widetilde R$ from $\mathcal{U}(I_H)^{n\times m},~\mathcal{U}(I_{G})^{n\times m}$, $\mathcal{U}(I_P)^{n\times l},~\mathcal{U}(I_Q)^{n\times l},~\mathcal{U}(I_{R})^{n\times l}$. Then we  generate randomly a nondecreasing vector $\overline E=(\overline E_1,\ldots,\overline E_n)\in \{0,...,m\}^n$ and a  pair of nondecreasing vectors $\widetilde E=(\widetilde E_1,\ldots,\widetilde E_n)\in \{0,...,m\}^n$, $F=(F_1,\ldots,F_n)\in \{0,...,l\}^n$ satisfying $\widetilde E_{i-1}\geq F_i$. We set
	\begin{equation*}
		\begin{aligned}
			&\widetilde H_{ij} = 0, \widetilde G_{hi} = 0,\text{for all}~h\leq \overline E_i<j,~\widetilde P_{ij} = 0,\text{for all}~\widetilde E_i<j \leq F_i,\\
			&\widetilde Q_{ij} = 0,\text{for all}~j > F_i,~\widetilde R_{ji} = 0,\text{for all}~j \leq \widetilde E_i.\\
		\end{aligned}
	\end{equation*}
	Eventually, we normalize the sum to one as:
	{\small
		\begin{equation*}
			\begin{aligned}
				H_{ij}=\frac{ \widetilde H_{ij}}{\sum_{k=1}^{n}\widetilde H_{kj}},~G_{ij}=\frac{\widetilde G_{ij}}{\sum_{k=1}^{m}\widetilde G_{ik}},~P_{ij}=\frac{\widetilde P_{ij}}{\sum_{k=1}^{n}\widetilde P_{kj}},~Q_{ij}=\frac{\widetilde Q_{ij}}{\sum_{k=1}^{n}\widetilde Q_{kj}},~R_{ij}=\frac{\widetilde R_{ij}}{\sum_{k=1}^{l}\widetilde R_{ik}}.
			\end{aligned}
		\end{equation*}
	}
	The above procedure yields a causal pair of  matrices $H,G$, and a relatively causal triple of $P,Q,R$, which satisfy Assumption \ref{asm11}(b).
	
	\vskip 2mm
	\noindent
	\textbf{(ii) The choices of $M$ and $\mathcal{L}$}
	\vskip 2mm
	
	\noindent
	We pick the matrix $\mathcal{L}$ in \eqref{Simp} through  three distinct ways:
	\begin{itemize}
		\item[(a)]
		$\mathcal{L}=MM^\top$, where $M$ is defined by the following:.
		\begin{equation*}\label{samp}
			M=(I_n-\frac{1}{n}e_ne_n^\top)\widetilde M,~\text{for}~\widetilde M~\text{sampled from}~ \mathcal{U}(I_M)^{n\times (n-1)}.
		\end{equation*}
		\item[(b)] The graph Laplacian of a connected Watts--Strogatz small-world graph, where the degree of each node is a  integer randomly chosen from $\{1,\ldots,n\}$.
		\item[(c)]  $\mathcal{L}=nI_n-e_ne_n^\top$, which is called complete graph Laplacian.
	\end{itemize}
	For each of the above cases, we normalize $\mathcal{L}$ such that $\|\mathcal{L}\|_2=1$. It is easy to see that for the case (a), Assumption \ref{asm11}(a) necessarily holds. For the cases (b) and (c), by \cite[Proposition 2.16]{fran}, there exists the factorization $\mathcal{L}=MM^\top$ such that $M$ satisfies Assumption \ref{asm11}(b).
	\vskip 1mm
	
	We numerically investigate how the choice of $\mathcal{L}$
	affects the overall performance. To avoid finding the corresponding $M$, we implement the equivalent version (\ref{Simp}) of Algorithm \ref{AlgG}.
	We set $U=0$, generate matrices $H,G,P,Q,R$ randomly following the procedure in (i), and execute (\ref{Simp}) over 200 independent runs, each consisting of 100 iterations.
	We track the final value of the objective function residual and the associated algebraic connectivity, defined as the smallest non-zero eigenvalue of $\mathcal{L}$ and  denoted by $\varrho$.  The results are visualized in Figure \ref{fig1}(a), where the blue dots correspond to $\mathcal{L}$ in (a),  the black ones to $\mathcal{L}$ in (b), and the red one to the complete graph Laplacian. This figure demonstrates that within the reflected forward-backward framework of this study, using  the complete graph Laplacian yields superior results. It also indicates that graph Laplacians mostly deliver better performance than matrices randomly sampled according to  (a).
	
	\begin{figure}[H]
		\centering
		\begin{subfigure}[T]{0.4\columnwidth}
			\small{(a)}\includegraphics[width=\textwidth]{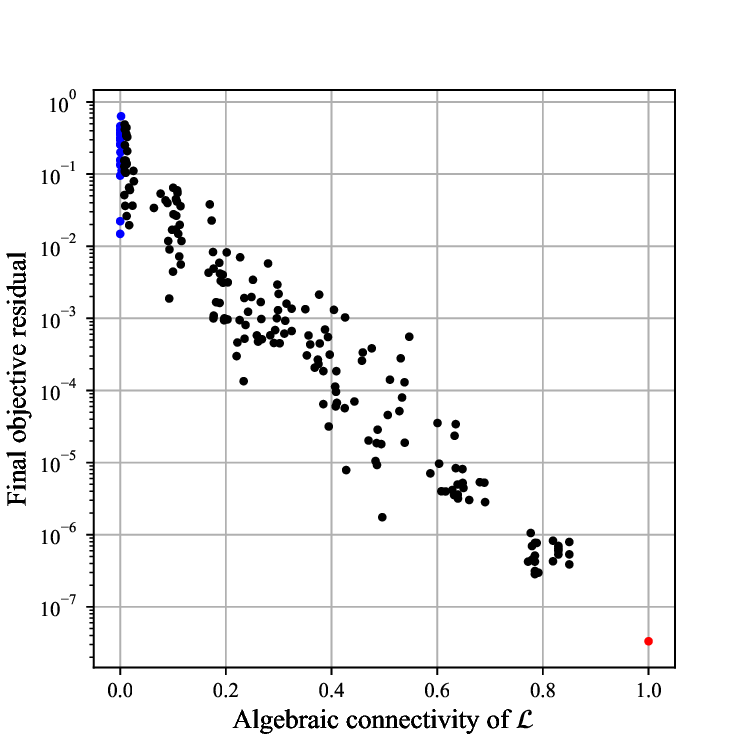}
		\end{subfigure}
		\begin{subfigure}[T]{0.4\columnwidth}
			\small{(b)}\includegraphics[width=\textwidth]{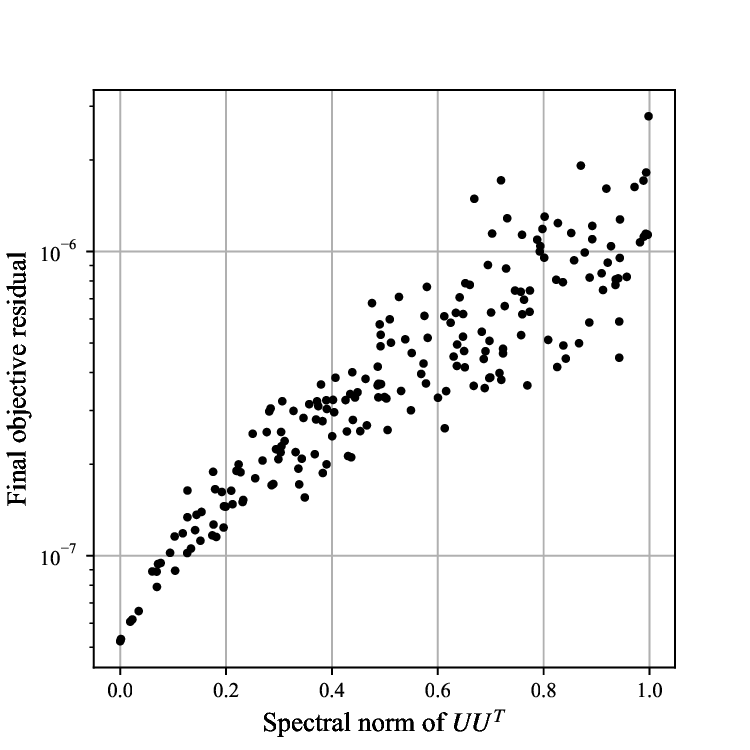}
		\end{subfigure}
		\caption{(a) Testing the influence of the spectrum
			of $\mathcal{L}$: final residual against $\varrho$. (b) Testing the influence of $U$: final residual against $\|UU^\top\|_2$.}
		\label{fig1}
	\end{figure}
	
	\noindent
	\textbf{(iii) The random choice of $U$}
	\vskip 2mm
	
	\noindent
	We investigate how the choice of $U$ affects the overall performance.
	We use the same experimental setup as in (ii), but here we keep a fixed (randomly generated) Laplacian 	$\mathcal{L}$. We randomly generate a matrix $U$ using the procedure from the uniform distribution as before. We compute $UU^\top$ and rescale it so that its spectral norm $\|UU ^\top\|_2$ matches a value randomly selected from the interval [0,1]. We record the final objective function residual and the corresponding value of $\|UU ^\top\|_2$. The results are illustrated in Figure \ref{fig1}(b), verifying that   $U=0$ is an optimal choice, which coincides with Remark \ref{rem11}(1).
	
	\vskip 2mm
	\noindent
	\textbf{(iv) The choice of $H,G,P,Q,R$ through a constrained convex programming}
	\vskip 2mm
	
	\noindent	
	We first demonstrate that using individual estimates of the cocoercivity constants of $B_i$ and  Lipschitz constants of $C_i$ can improve the algorithm's performance. To this end,  we first sample $\Psi$ and $\Theta_j$ uniformly. Then we use the same matrix but separately scale a subset of randomly selected two rows of $\Psi$ and $\Theta_j$ by a factor of 5 to introduce heterogeneity into the data fidelity term.
	
	We test two setups: (I) using potentially distinct $\sigma_i$ and $L_i$, and (II) using a uniform $\sigma_{\min} = \min\{\sigma_1, \dots, \sigma_m\}$ and  $L_{\max} = \max\{L_1, \dots, L_l\}$.  We generate matrices $H,G$ and $P,Q,R$ randomly following the procedure in (i) and  run Algorithm \ref{AlgG} for 20 trials.
	We measure the objective function residual with the iterations  in Figure \ref{fig3}(a), which  clearly show that accounting for the heterogeneity of the data significantly boosts the method's performance.
	
	Next we numerically investigate how the choices of $H,G,P,Q,R$ affects the overall performance. 	To this end, we let
	\begin{equation*}
		W = \frac{1}{2}(H-G^{\top}){\Sigma^{-1}}(H^{\top}-G)+(P-Q) L(P^\top-Q^\top)+(P-R^{\top}) L(P^\top-R).
	\end{equation*}
	We  analyze the performance of Algorithm \ref{AlgG} in relation to the spectral norm of $W$.
	
	We sample $H,G,P,Q,R$ randomly 500 times following the procedure in (i) and run Algorithm \ref{AlgG}. During each run, we track the objective function residual across iterations and record $\|W\|_2$. The results are displayed in Figure \ref{fig3}(b), where each line  represents a specific trial and the color indicates the magnitude of $\|W\|_2$.
	It is observed that the best performance is achieved when $H,G, P,Q,R$ are chosen to minimize the norm of $\|W\|_2$.
	
	\begin{figure}[h!]
		\centering
		\begin{subfigure}[t]{0.4\columnwidth}
			\adjustbox{valign=t}{\small{(a)}\includegraphics[width=\textwidth]{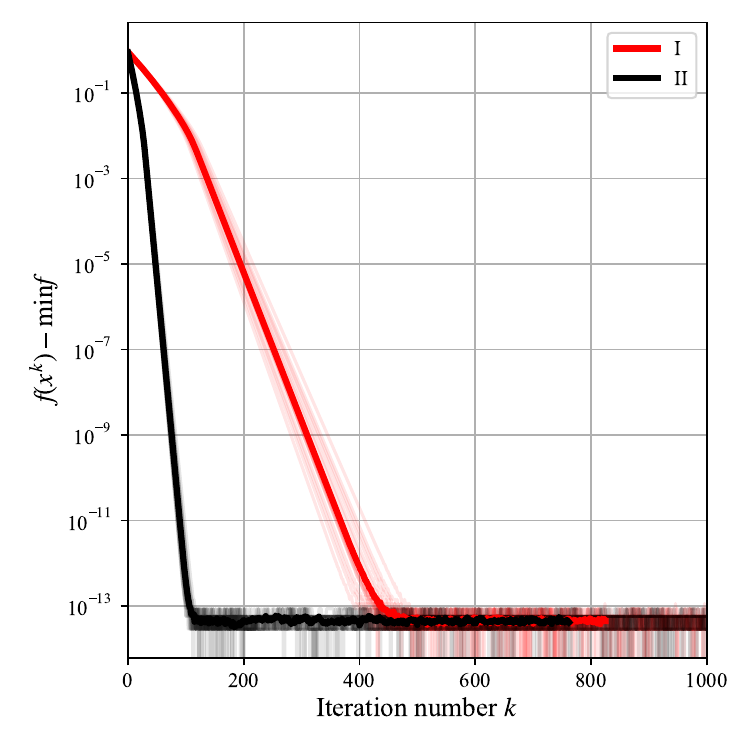}}
		\end{subfigure}
		\begin{subfigure}[t]{0.4\columnwidth}
			\adjustbox{valign=t}{\small{(b)}\includegraphics[width=\textwidth]{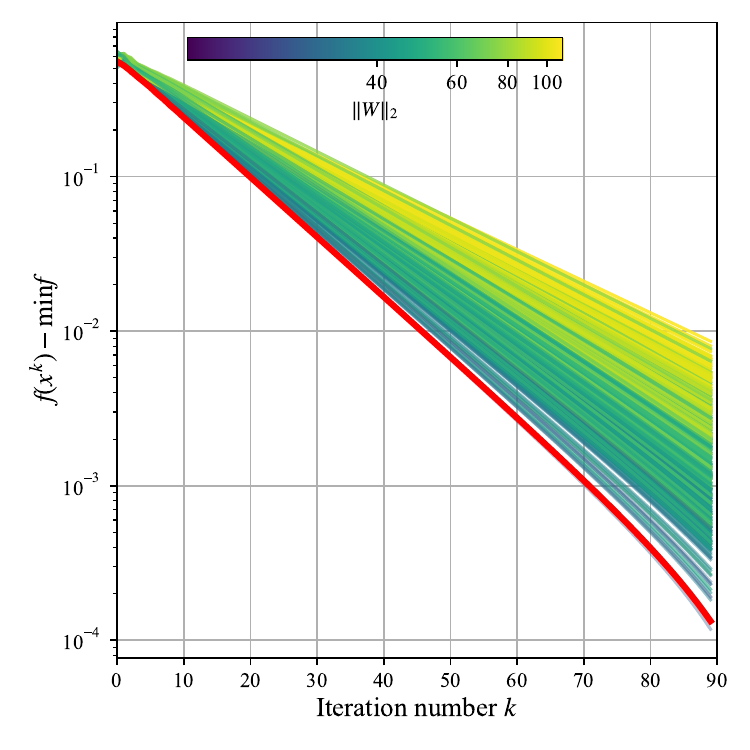}}
		\end{subfigure}
		\caption{(a) Accounting for heterogeneity of data enhances performances. (b) Objective value decrease high lighting $\|W\|_2$.}
		\label{fig3}
	\end{figure}

	The above observation suggests selecting the causal pair of matrices $H,G$ and  relatively causal triple of matrices  $P,Q,R$ by minimizing $\|W\|_2.$ Let
	$W_1=\frac{1}{\sqrt{2}}(H-G^\top)\sqrt{\text{diag}(\Sigma^{-1})},$
	$W_2=(P-Q)\sqrt{\text{diag}(L)}$, $W_3=(P-R^\top)\sqrt{\text{diag}(L)}$ and $\Upsilon=[W_1,W_2,W_3]$, then
	$$
	W =W_1 W_1^\top+W_2 W_2^\top+W_3 W_3^\top=[W_1,W_2,W_3][W_1,W_2,W_3]^\top=\Upsilon\Upsilon^\top,
	$$
	which  is obviously symmetric. By the symmetry of $W$ and the definition of the $2$-norm of a matrix, we have $\|W\|_2=\|\Upsilon\|_2^2$. Therefore,  we can minimize $\|W\|_2$ by minimizing $\|\Upsilon\|_2$, i.e.,
	solving the following convex optimization problem:
	\begin{equation}\label{qu2}
		\begin{aligned}
			&\underset{\substack{H,G^\top\in\mathbb{R}^{n\times m}\\P,Q,R^\top\in\mathbb{R}^{n\times l}}}{\min}\|[\frac{1}{\sqrt{2}}(H-G^\top)\sqrt{\text{diag}(\Sigma^{-1})},(P-Q)
			\sqrt{\text{diag}(L)},(P-R^\top)\sqrt{\text{diag}(L)}]\|_2\\
			&\text{subject to~~}  H^\top e_n =Ge_n=e_{m},~P^\top e_n =Q^\top e_n=e_{l},~R e_n=e_{l}, \\
			&~~~~~~~~~~~~~~~H\in \mathcal{S}(\overline E),~G^\top\in \mathcal{S}^c(\overline E),~Q\in \mathcal{S}(F),P\in \mathcal{S}^c(F)\cap\mathcal{S}(\widetilde E),~R\in \mathcal{S}^c(\widetilde E).
		\end{aligned}
	\end{equation}	
	
	Based on the above tests, we obtain the following heuristic strategies for the matrix selections.
	\begin{itemize}
		\item[(a)]Following (ii), we choose
		$M\in\mathbb{R}^{n\times(n-1)}$ from the factorization $\mathcal{L}=MM^{T}$, where $\mathcal{L}=nI_n-e_ne_n^\top$.
		\item[(b)]Following (iii), we choose $U=0$.
		\item[(c)]Following (iv), we choose matrices $H$, $G,P,Q,R$ by solving the minimization problem \eqref{qu2}.
	\end{itemize}
	We refer to Algorithm \ref{AlgG} with  matrices given by the above heuristic strategies  as Complete-Reflected-Forward-Backward (CRFB).
	
	\subsection{Application to a regularized saddle-point problem}
	In this section, we   compare Complete-Reflected-Forward-Backward with the distributed forward-backward method with reflection terms (Distributed-Forward-Backward-Reflection: DFBR), generalized parallel Davis--Yin method with reflected terms (Parallel-Davis--Yin-Reflected: PDYR) and generalized sequential Davis--Yin method with reflected terms (Sequential-Davis--Yin-Reflected: SDYR) in \cite{FBWR}, which  are also given in Appendix.
	
	\subsubsection{Problem description}
	\label{sec4.1}
	
	We study a class of zero-sum matrix games with regularization-like terms. This model can be viewed as a generalization of \cite{NO}, where the scalar parameters preceding the two regularization terms are replaced by matrices. Moreover, our model is also structurally related to the extensive-form games with convex-concave saddle-point structure studied in \cite{AAAI}, where the dilated convex functions incorporated into the payoff can be interpreted as regularization terms.
	
	Specifically, there are two teams, each consisting of $n$ players. The payoff matrix for Team 1 is given by $\Theta = \sum_{i=1}^{l} \Theta_i \in \mathbb{R}^{d_2 \times d_1}$, whereas that of Team 2 is $-\Theta$. Each player in Team 1 is paired with a counterpart from Team 2, and the $i$-th player pair is linked to the payoff matrices $(\Theta_i, -\Theta_i)$. The Nash equilibria of this game coincides with the saddle points of the subsequent min-max problem:\\
	\begin{equation}\label{ball}
		\min_{u\in\Delta^{d_1}}\max_{v\in\Delta^{d_2}}F(u,v)=\sum_{j=1}^l \langle \Theta_j u, v\rangle+\frac{1}{2}\sum_{j=1}^m\|V_iu\|^2 - \frac{1}{2}\sum_{j=1}^m\|J_iv\|^2
	\end{equation}
	where  $\Delta^{d_1}=\{u=(u_1,...,u_{d_1})\in[0,+\infty)^{d_1}\mid\sum_{i=1}^{d_1}u_i=1\}$, $\Delta^{d_2}=\{v=(v_1,...,v_{d_2})\in[0,+\infty)^{d_2}\mid\sum_{i=1}^{d_2}v_i=1\}$ are the unit simplex. The problem (\ref{ball}) can be also expressed as \eqref{saddle} with
	\begin{equation}\label{ball2}
		\min_{u\in\mathbb{R}^{d_1}}\max_{v\in\mathbb{R}^{d_2}} \sum_{j=1}^{n}\left(\iota_{\Delta^{d_1}}(u) -\iota_{\Delta^{d_2}}(v)\right)+\sum_{j=1}^l \langle \Theta_j u, v\rangle+\frac{1}{2}\sum_{j=1}^m\left(\|V_iu\|^2 - \|J_iv\|^2\right),
	\end{equation}
	where $\iota_{\Delta^{d_1}}$ and $\iota_{\Delta^{d_2}}$ are the indicators of $\Delta^{d_1}$ and $\Delta^{d_2}$, respectively.
	By the first-order optimality condition, the problem \eqref{ball2}  can be formulated as an inclusion problem
	\begin{equation*}
		\label{ABC-2}
		\hbox{find}\ x\in\mathbb{R}^{d_1+d_2}\ \hbox{such\ that}\ 0\in \sum_{j=1}^{n}A_j(x)+\sum_{j=1}^{m}B_j(x)+\sum_{j=1}^{l}C_j(x),
	\end{equation*}
	where
	\begin{equation*}
		x = \begin{bmatrix}
			u\\
			v
		\end{bmatrix}\in \mathbb{R}^{d_1}\times \mathbb{R}^{d_2},
		~A_j=
		\begin{bmatrix}
			N_{\Delta^{d_1}}&\mathbf{0}\\
			\mathbf{0}&N_{\Delta^{d_2}}
		\end{bmatrix},
		~	B_j=
		\begin{bmatrix}
			V_j^\top V_j&\mathbf{0}\\
			\mathbf{0}&J_j^\top J_j
		\end{bmatrix},~		C_j=
		\begin{bmatrix}
			\mathbf{0}&\Theta_j\\
			-\Theta_j&\mathbf{0}
		\end{bmatrix},
	\end{equation*}
	and $N_{\Delta^{d_1}}$ and $N_{\Delta^{d_2}}$ are the normal cones of $\Delta^{d_1}$ and $\Delta^{d_1}$, respectively.
	Recall that $A_j$,  $j=1,\dots,n$ is maximally monotone, $B_j$, $j=1,\dots,n-1$ is  $\sigma_j$-cocoercive with $\sigma_j=\frac{1}{\max\left\{ \|V_j\|_2^2,\ \|J_j\|_2^2 \right\}}$, and $C_j$, $j=1,\dots,n-2$ is monotone and $L_j$-Lipschitz continuous constant with $L_j=\|\Theta_j\|_2$. Thus the problem \eqref{ball2} is a special case of the problem \eqref{ABC}.

	\subsubsection{Experiment settings and results}

	In experiments, we take $n=20$, $m=19$, $l=18$, and  let $d_1=d_2=d=30$.	
	We let $V_j=J_j=\Omega_j$, and
	generate $\Omega_j$ and $\Theta_j$
	randomly in the following four different ways with random
	number generator \textit{seed} = 10.

	\begin{itemize}
		\item[{\rm(a)}]The elements in $\Omega_j$ and $\Theta_j$ are independently sampled from uniform distribution $\mathcal{U}(-1,1)$ and $\mathcal{U}(-10,10)$, respectively.
		\item[{\rm(b)}]The elements in $\Omega_j$ and $\Theta_j$ are independently sampled from normal distribution $\mathcal{N}(0,1)$ and $\mathcal{N}(0,10)$, respectively.
		\item[{\rm(c)}]The elements in $\Omega_j$ and $\Theta_j$ are independently sampled from exponential distribution $Exp(0.2)$ and $ Exp(2)$, respectively.
		\item[{\rm(d)}]The elements in $\Omega_j$ and $\Theta_j$ are independently sampled from poisson distribution $Poisson(0.2)$ and $Poisson(2)$, respectively.
	\end{itemize}
	With the above setup already in place,  we  take  take zero initial values for all methods.
	Referring to \cite{AAAI} and \cite{GAP}, we define the primal-dual gap as follows:
	\begin{equation*}
		Gap(u,v) = \text{max}_{\overline{v}\in \Delta^d} F(u,\overline{v})	-\text{min}_{\overline{u}\in \Delta^d} F(\overline{u},v).
	\end{equation*}	
	The convex optimization subproblems $\text{max}_{\overline{v}\in \Delta^d} F(u^k,\overline{v})$ and $\text{min}_{\overline{u}\in \Delta^d} F(\overline{u},v^k)$ in the $Gap(u^k,v^k)$ are solved by using CVX with  MOSEK solver.
	
	Figure \ref{fig2} depicts the decay of the primal-dual gap $Gap(u^k,v^k)$ with the number of iterations, which illustrates that Complete-Reflected-Forward-Backward decreases the fastest among the four algorithms.	We report the number of iterations  required to achieve $Gap(u^k,v^k)<\epsilon$ for $\epsilon\in\{10^{-3},10^{-5},10^{-7}\}$ in  Table \ref{comp}. It can be observed that Complete-Reflected-Forward-Backward performs the best in all cases and for all error tolerances and the behavior of Distributed-Forward-Backward-Reflection is the worst.
	
	\begin{figure}[H]
		\centering
		\begin{subfigure}[T]{0.49\columnwidth}
			\includegraphics[width=\textwidth]{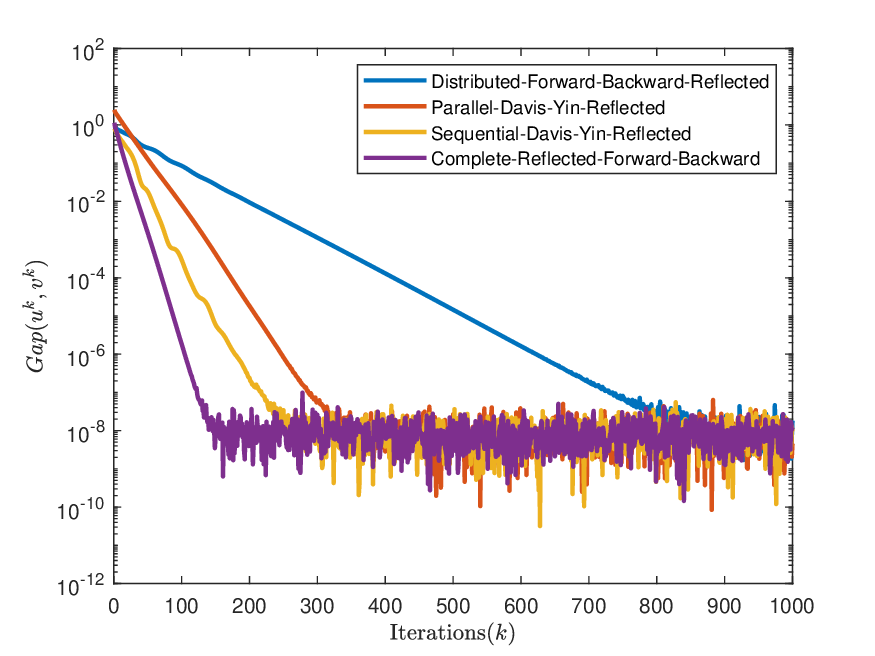}
			\caption*{(a)}
		\end{subfigure}
		\begin{subfigure}[T]{0.49\columnwidth}
			\includegraphics[width=\textwidth]{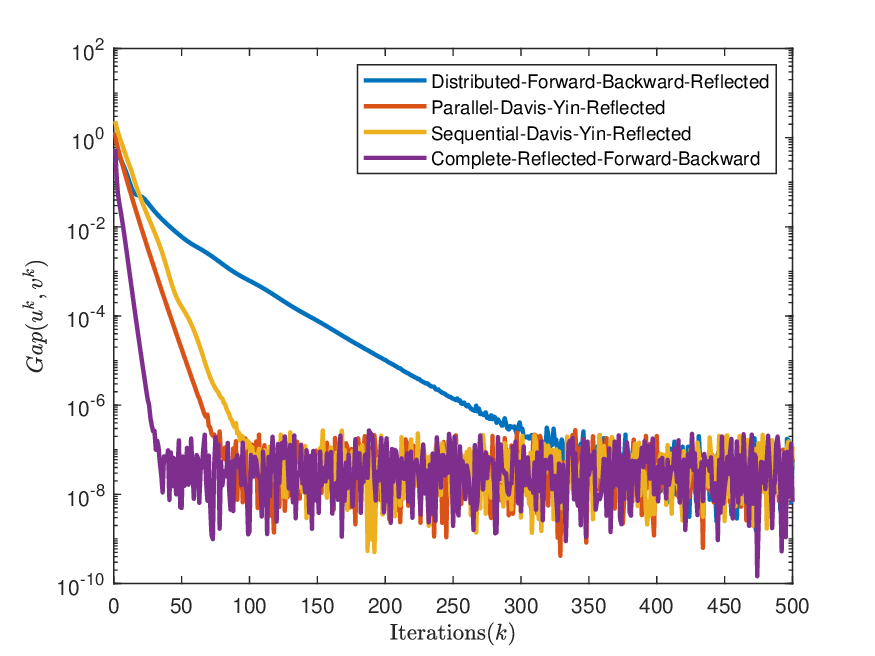}
			\caption*{(b)}
		\end{subfigure}
		\begin{subfigure}[T]{0.49\columnwidth}
			\includegraphics[width=\textwidth]{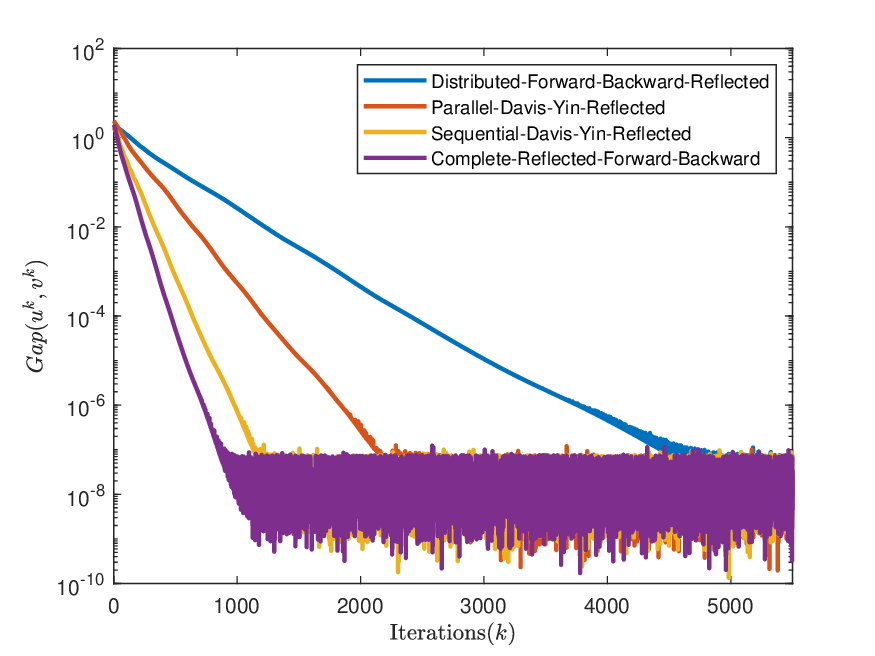}
			\caption*{(c)}
		\end{subfigure}
		\begin{subfigure}[T]{0.49\columnwidth}
			\includegraphics[width=\textwidth]{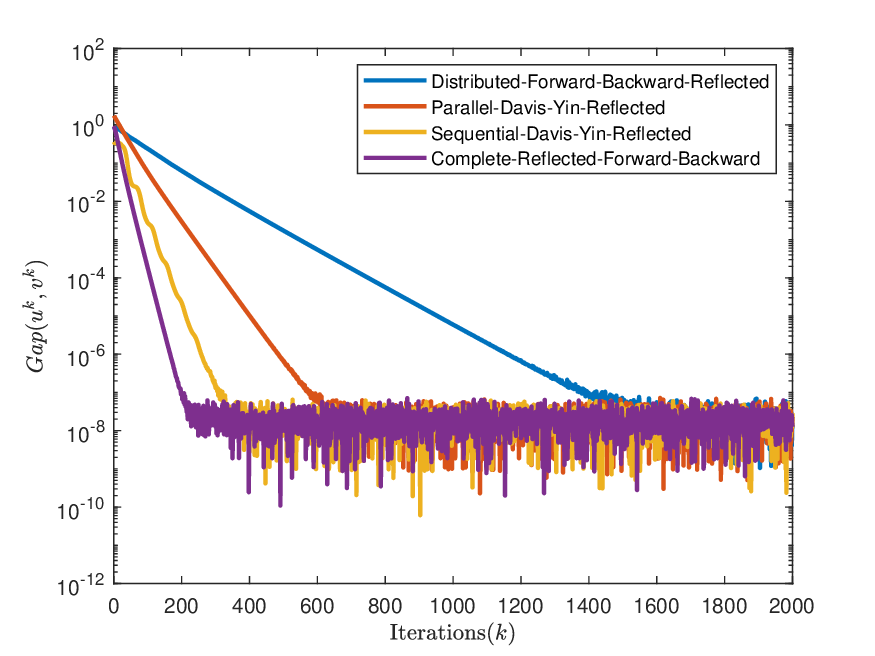}
			\caption*{(d)}
		\end{subfigure}
		\caption{Decay of the primal-dual gap with with the number of iterations for four methods under four different matrices setting.}
		\label{fig2}
	\end{figure}
	
	\begin{table}[htbp]
		\centering
		\caption{Numerical comparisons  of four methods under four different matrices setting.}
		\setlength{\tabcolsep}{18pt}
		\begin{tabular}{cccccc}
			\toprule
			Matrix & $\epsilon$ &\makecell[l]{\footnotesize CRFB} &
			\makecell[l]{\footnotesize DFBR} &
			\makecell[l]{\footnotesize PDYR} &
			\makecell[l]{\footnotesize SDYR} \\
			\cmidrule{1-6}
			\multirow{3}{*}{(a)}
			& $10^{-3}$   & 54  & 306 & 137 & 79 \\
			& $10^{-5}$   &  88 & 518 & 210 & 145\\
			& $10^{-7}$   &  123 & 730 &287 & 218 \\
			\cmidrule{1-6}
			\multirow{3}{*}{(b)}
			& $10^{-3}$   & 12 & 88 &  31 & 40 \\
			& $10^{-5}$   & 21 & 201 & 54 & 67 \\
			& $10^{-7}$   & 31 & 318 & 79 & 101  \\
			\cmidrule{1-6}
			\multirow{3}{*}{(c)}
			& $10^{-3}$   & 352 & 1813 &  920 & 482 \\
			& $10^{-5}$   & 584 & 3023 & 1524 & 802 \\
			& $10^{-7}$   & 847 & 4441 & 2122 & 1132 \\
			\cmidrule{1-6}
			\multirow{3}{*}{(d)}
			& $10^{-3}$   & 79 & 547 &  238 & 124 \\
			& $10^{-5}$   & 137 & 954 & 402 & 213  \\
			& $10^{-7}$   & 198 & 1379 & 574 & 308 \\
			\bottomrule
		\end{tabular}
		\label{comp}
	\end{table}

	\FloatBarrier
	\bibliographystyle{plain}

\begin{thebibliography}{00}	
		
		
		\bibitem{Anton}
		\AA kerman, A.,  Chenchene, E., Giselsson, P. et al. Splitting the forward-backward algorithm: a full characterization. 	https://arxiv.org/abs/2504.10999
		
		\bibitem{Alac2021}
		Alacaoglu, A., Malitsky, Y., Cevher, V. Forward-reflected-backward method with variance reduction.
		Computational Optimization and Applications. 80, 321--346 (2021).
		
		\bibitem{ring}
		Arag\'{o}n--Artacho, F.J., Malitsky, Y., Tam, M.K. et al. Distributed forward-backward methods for ring networks. Computational Optimization and Applications. 86, 845--870 (2023).
		
		\bibitem{fran}
		Arag\'{o}n--Artacho, F.J.,  Campoy, R., L\'{o}pez-Pastor, C. Forward-backward algorithms devised by graphs. SIAM Journal on Optimization 35(4),  2423--2451 (2025).
		
		\bibitem{BC2011}
		Bauschke, H.H., Combettes, P.L. \textit{Convex Analysis and Monotone Operator Theory in Hilbert Spaces}, 2nd ed. Springer, Cham, (2017).
		
		\bibitem{Bred}
		Bredies, K., Chenchene, E., Lorenz, D.A. et al. Degenerate preconditioned proximal point algorithms. SIAM Journal on Optimization. 32(3), 2376--2401 (2022).	
		
		\bibitem{Bredies-Graph2024}
		Bredies, K., Chenchene, E., Naldi, E. Graph and distributed extensions of the Douglas--Rachford method. SIAM Journal on Optimization. 34(2), 1569--1594 (2024).
		
		\bibitem{minh}
		Dao, M.N., Tam, M.K., Truong, T.D. A general approach to distributed operator splitting. Journal of Mathematical Analysis and Applications. 562(2), 130692 (2026).
		
		\bibitem{DR}
		Douglas, J., Rachford, H. H. On the numerical solution of heat conduction problems in two and three space variables. Transactions of the American Mathematical Society. 82, 421--439 (1956).
		
		\bibitem{DRPR}
		Eckstein, J., Bertsekas, D. P. On the Douglas–Rachford splitting method and the proximal point algorithm for maximal monotone operators. Mathematical Programming. 55, 293--318 (1992).
		
		\bibitem{AAAI}
		Farina, G., Kroer, C., Sandholm, T. Online convex optimization for sequential decision processes and extensive-form games. Proceedings of the AAAI Conference on Artificial Intelligence. 33(1), 1917--1925 (2019).
		
		\bibitem{FBWR}
		Fu, Y., Zhao, J., Dong, Q.L. et al. Frugal forward-backward splitting methods with reflection terms. Journal of Nonlinear and Convex Analysis. accepted.
		
		\bibitem{GAP}
		Hamedani, E.Y., Aybat, N.S. A primal-dual algorithm with line search for general convex-concave saddle point problems.
		SIAM Journal on Optimization. 31(2), 1299--1329 (2021).
		
		\bibitem{Jiang2025}
		Jiang, R., Mokhtari, A. Generalized optimistic methods for convex-concave saddle point problems. SIAM Journal on Optimization. 35(3), 2066--2097 (2025).
		
		\bibitem{DYR}
		Ke, S., Dong, Q.L., Petru\c{s}el, A. et al. Generalized reflected Davis--Yin splitting algorithms for the monotone inclusions problems. Fixed Point Theory. 27, 327--348 (2026).
		
		\bibitem{n=m=1}
		Lions, P.L., Mercier, B. Splitting algorithms for the sum of two nonlinear operators. SIAM Journal on Numerical Analysis. 16, 964--979 (1979).
		
		\bibitem{MKT}
		Malitsky, Y., Tam, M.K. Resolvent splitting for sums of monotone operators with minimal lifting. Mathematical Programming. 201, 231--262 (2023).
		
		
		\bibitem{Pierra}
		Pierra, G. Decomposition through formalization in a product space. Mathematical Programming. 28, 96--115 (1984).
		
		\bibitem{Rockafellar1970}
		Rockafellar, R.T. On the maximal monotonicity of subdifferential mappings.
		Pacific Journal of Mathematics. 33(1), 209--216 (1970).
		
		\bibitem{saddlemin}
		Rockafellar, R.T. Monotone operators associated with saddle-functions and minimax problems. American Mathematical Society. 18(1), 241--250 (1970).
		
		
		
		\bibitem{proxi}
		Rockafellar, R.T. Monotone operators and the proximal point algorithm. SIAM Journal on Control and Optimization. 14, 877--898 (1976).
		
		
		\bibitem{Ryu}
		Ryu, E.K. Uniqueness of DRS as the 2 operator resolvent-splitting and impossibility of 3 operator resolvent-splitting. Mathematical Programming. 182, 233--273 (2020).
		
		\bibitem{NO}
		Simonetto, A., Keviczky, T., Johansson, M. A regularized saddle-point algorithm for networked optimization with resource allocation constraints. 2012 IEEE 51st Conference on Decision and Control(CDC). 7476--7481 (2012).
		
		
		
		
		
		
		
		\bibitem{Tam}
		Tam, M.K. Frugal and decentralised resolvent splittings defined by nonexpansive operators. Optimization Letters. 18, 1815--1837 (2024).
		
		\bibitem{Tseng}
		Tseng, P. A modified forward-backward splitting method for maximal monotone mappings. SIAM Journal on Control and Optimization. 38, 431--446 (2000).
		
		\bibitem{Zhao2025}
		Zhao, L., Zhu, D., Zhang, S. An augmented Lagrangian approach to conically constrained nonmonotone variational inequality problems.
		Mathematics of Operations Research. 50(3), 1868--1900 (2025).
		
		\bibitem{primal-2026}
		Zhou, L., Ma, F. A symmetric primal-dual method with double extrapolation for composite convex optimization involving three functions.
		Computational Optimization and Applications. 94, 111--140 (2026).
		
		
		
		
		
		%
		
		%
		
		
		%
		
		
		
		
		
		
		
		
		
		
		
		
	\end{thebibliography}

	\section*{Appendix: Realisations of the framework}
	
	We provide several realisations of Algorithm \ref{AlgG} for solving a special case of the problem \eqref{ABC}, all of which are taken from \cite{FBWR}.
	
	We consider the problem which is to find $x\in \mathcal{H}$ such that
	\begin{equation*}
		\label{ABC1}
		0  \in \left ( \sum_{i=1}^{n}A_i+\sum_{i=1}^{n-1}B_i +\sum_{i=1}^{n-2}C_i  \right )(x),
	\end{equation*}
	where $A_1,\dots,A_n : \mathcal{H} \rightrightarrows \mathcal{H}$ are  maximally monotone,  $B_i: \mathcal{H}\to \mathcal{H} $ are $\sigma_i$-cocoercive,  $i=1,\dots,n-1$ and $C_i: \mathcal{H}\to \mathcal{H} $ are monotone and $L_i$-Lipschitz continuous, $i=1,\dots,n-2$.
	Let  $\Sigma=\diag({\sigma_1},\dots,\sigma_{n-1})$ and $L=\diag({L_1},\dots,L_{n-2})$ and  set $\overline{L}=\max_{1\le i\le n-2}L_i$ and $\underline{\sigma}=\min_{1\le i\le n-1}\sigma_i$.
	\newcounter{specialsec}
	\renewcommand{\thesubsection}{A.\arabic{specialsec}}
	\stepcounter{specialsec}
	\subsection{The first realisation: Distributed forward-backward method with reflection terms}
	
	We select  matrices as follows
	\begin{equation*}
		\begin{aligned}
			K=
			&\begin{pmatrix}
				\frac{2}{d}&-\frac{1}{d}&0&\cdots&0&-\frac{1}{d}\\
				-\frac{1}{d}&\frac{2}{d}&-\frac{1}{d}&\cdots&0&0\\
				0&-\frac{1}{d}&\frac{2}{d}&\cdots&0&0\\
				\vdots&\vdots&\vdots&\ddots&\vdots&\vdots\\
				0&0&0&\cdots&\frac{2}{d}&-\frac{1}{d}\\
				-\frac{1}{d}&0&0&\cdots&-\frac{1}{d}&\frac{2}{d}\\
			\end{pmatrix}_{n\times n},
			M=\lambda
			\begin{pmatrix}
				1&0&\cdots&0&0\\
				-1&1&\cdots&0&0\\
				0&-1&\cdots&0&0\\
				\vdots&\vdots&\ddots&\vdots&\vdots\\
				0&0&\cdots&-1&1\\
				0&0&\cdots&0&-1\\
			\end{pmatrix}_{n\times (n-1)},\\
			P=
			&\begin{pmatrix}
				0&0&\cdots&0\\
				1&0&\cdots&0\\
				0&1&\cdots&0\\
				\vdots&\vdots&\ddots&\vdots\\
				0&0&\cdots&1\\
				0&0&\cdots&0\\
			\end{pmatrix}_{n\times (n-2)},
			Q=
			\begin{pmatrix}
				0&0&\cdots&0\\
				0&0&\cdots&0\\
				1&0&\cdots&0\\
				0&1&\cdots&0\\
				\vdots&\vdots&\ddots&\vdots\\
				0&0&\cdots&1\\
			\end{pmatrix}_{n\times (n-2)},
			R=
			\begin{pmatrix}
				1&0&\cdots&0&0&0\\
				0&1&\cdots&0&0&0\\
				\vdots&\vdots&\ddots&\vdots&\vdots&\vdots\\
				0&0&\cdots&1&0&0\\
			\end{pmatrix}_{(n-2)\times n},\\
			H=
			&\begin{pmatrix}
				0&0&\cdots&0\\
				1&0&\cdots&0\\
				0&1&\cdots&0\\
				\vdots&\vdots&\ddots&\vdots\\
				0&0&\cdots&1\\
			\end{pmatrix}_{n\times (n-1)},
			G=
			\begin{pmatrix}
				1&0&\cdots&0&0\\
				0&1&\cdots&0&0\\
				\vdots&\vdots&\ddots&\vdots&\vdots\\
				0&0&\cdots&1&0\\
			\end{pmatrix}_{(n-1)\times n},
		\end{aligned}
	\end{equation*}
	where  $d,\lambda>0$. It is easy to verify that
	$e_n^\top K e_n=0$ and  Assumption \ref{asm11}(a)\&(b) are satisfied.  To ensure the convergence, we must have
	\begin{equation}\label{18}
		\begin{aligned}
			0\preceq& K-MM^{\top}-\frac{1}{2}(H-G^{\top}){\Sigma^{-1}}(H^{\top}-G)\\
			&-
			(P-Q)L(P^\top-Q^\top)-(P-R^{\top})L(P^\top-R).
		\end{aligned}
	\end{equation}
	The right-hand side of (\ref{18}) can be expressed as:
	{\footnotesize
		\begin{equation*}\label{tt}
			\begin{aligned}
				&\begin{pmatrix}
					\frac{2}{d}&-\frac{1}{d}&0&\cdots&0&-\frac{1}{d}\\
					-\frac{1}{d}&\frac{2}{d}&-\frac{1}{d}&\cdots&0&0\\
					0&-\frac{1}{d}&\frac{2}{d}&\cdots&0&0\\
					\vdots&\vdots&\vdots&\ddots&\vdots&\vdots\\
					0&0&0&\cdots&\frac{2}{d}&-\frac{1}{d}\\
					-\frac{1}{d}&0&0&\cdots&-\frac{1}{d}&\frac{2}{d}\\
				\end{pmatrix}
				-\lambda^2
				\begin{pmatrix}
					1&-1&0&\cdots&0&0\\
					-1&2&-1&\cdots&0&0\\
					0&-1&2&\cdots&0&0\\
					\vdots&\vdots&\vdots&\ddots&\vdots&\vdots\\
					0&0&0&\cdots&2&-1\\
					0&0&0&\cdots&-1&1
				\end{pmatrix}\\
				&-\frac{1}{2}
				\begin{pmatrix}
					\frac{1}{\sigma_1} & -\frac{1}{\sigma_1} & 0 & \dots & 0 & 0 \\
					-\frac{1}{\sigma_1} & \frac{1}{\sigma_1}+\frac{1}{\sigma_2} & -\frac{1}{\sigma_2} & \dots & 0 & 0 \\
					0 & -\frac{1}{\sigma_2} & \frac{1}{\sigma_2}+\frac{1}{\sigma_3} & \dots & 0 & 0 \\
					\vdots & \vdots & \vdots & \ddots & \vdots & \vdots \\
					0 & 0 & 0 & \dots & \frac{1}{\sigma_{n-2}}+\frac{1}{\sigma_{n-1}} & -\frac{1}{\sigma_{n-1}} \\
					0 & 0 & 0 & \dots & -\frac{1}{\sigma_{n-1}} & \frac{1}{\sigma_{n-1}}
				\end{pmatrix}\\
				&-
				\begin{pmatrix}
					L_1&-L_1&0&\cdots&0&0&0\\
					-L_1&2L_1+L_2&-L_2-L_1&\cdots&0&0&0\\
					0&-L_1-L_2&L_1+2L_2+L_3&\cdots&0&0&0\\
					\vdots&\vdots&\vdots&\ddots&\vdots&\vdots&\vdots\\
					0&0&0&\cdots&L_{n-4}+2L_{n-3}+L_{n-2}&-L_{n-3}-L_{n-2}&0\\
					0&0&0&\cdots&-L_{n-3}-L_{n-2}&L_{n-3}+2L_{n-2}&-L_{n-2}\\
					0&0&0&\cdots&0&-L_{n-2}&L_{n-2}\\
				\end{pmatrix}.
			\end{aligned}
		\end{equation*}
	}
	To ensure that the above matrix is negative semidefinite, the following conditions need to be satisfied:
	\begin{equation*}
		\left\{
		\begin{aligned}
			\frac{2}{d}-\lambda^2-\frac{1}{2\sigma_1}-L_1&\geq0\\
			\frac{2}{d}-2\lambda^2-\frac{1}{2\sigma_1}-\frac{1}{2\sigma_{2}}-2L_{1}-L_{2}&\geq0\\ \frac{2}{d}-2\lambda^2-\frac{1}{2\sigma_i}-\frac{1}{2\sigma_{i+1}}-L_{i-1}-2L_{i}-L_{i+1}&\geq0,~i=2,...,n-3,\\
			\frac{2}{d}-2\lambda^2-\frac{1}{2\sigma_{n-2}}-\frac{1}{2\sigma_{n-1}}-L_{n-3}-2L_{n-2}&\geq0\\
			\frac{2}{d}-\lambda^2-\frac{1}{2\sigma_{n-1}}-L_{n-2}&\geq0,\\
		\end{aligned}
		\right.
	\end{equation*}
	which can be reformulated as
	\begin{equation}\label{ww}
		\left\{
		\begin{aligned}
			2-\frac{d}{2\sigma_1}-dL_1&\geq\lambda^2d\\
			1-\frac{d}{4\sigma_1}-\frac{d}{4\sigma_{2}}-\frac{dL_{2}}{2}-dL_{1}&\geq\lambda^2d\\ 1-\frac{d}{4\sigma_i}-\frac{d}{4\sigma_{i+1}}-\frac{dL_{i-1}}{2}-dL_i-\frac{dL_{i+1}}{2}&\geq\lambda^2d,~~~~i=2,...,n-3,\\
			1-\frac{d}{4\sigma_{n-2}}-\frac{d}{4\sigma_{n-1}}-\frac{dL_{n-3}}{2}-dL_{n-2}&\geq\lambda^2d\\
			2-\frac{d}{2\sigma_{n-1}}-dL_{n-2}&\geq\lambda^2d.\\
		\end{aligned}
		\right.
	\end{equation}
	Since $d>0,$ it follows from \eqref{ww} that  $d<\overline d$ where $\overline d=\text{min}\{\frac{2}{\frac{1}{2\sigma_{1}}+L_1},  \frac{1}{\frac{1}{4\sigma_1}+\frac{1}{4\sigma_2}+\frac{L_2}{2}+L_1},\\
	\min_{2\le i\le n-3}\frac{1}{\frac{1}{4\sigma_i}
		+\frac{1}{4\sigma_{i+1}}+\frac{L_{i-1}}{2}+L_i+\frac{L_{i+1}}{2}},$ $\frac{1}{\frac{1}{4\sigma_{n-2}}+
		\frac{1}{4\sigma_{n-1}}+\frac{L_{n-3}}{2}+L_{n-2}},\frac{2}{\frac{1}{2\sigma_{n-1}}+L_{n-2}}\}.$
	It is easy to conclude that $\overline d$ is larger when $C_i$ with larger $L_i$ are placed in the first or last evaluations or when $B_i$ with larger $\sigma_i$ and $C_i$ with larger $L_i$  are grouped together.
	
	Let $\widetilde{z}_i^k:=\lambda d z_i^k$, $i=1,\ldots,n-1$, $\gamma_k\equiv\gamma$ for $k\ge0$ and $\widetilde{\gamma}=\lambda^2d\gamma$, then Algorithm \ref{AlgG} becomes
	\begin{equation*}
		\label{AlgA1}
		\left\{
		\begin{aligned}
			&x_1^k=J_{dA_1}(\widetilde{z}_1^k)\\
			&x_2^k=J_{dA_2}(\widetilde{z}_2^k+x_1^k-\widetilde{z}_1^k-dB_1(x_1^k)-dC_1(x_1^k))\\
			&x_i^{k} = J_{dA_i}(\widetilde{z}_i^k+x_{i-1}^k-\widetilde{z}_{i-1}^k-dB_{i-1}(x_{i-1}^k)-dC_{i-1}
			(x_{i-1}^k)\\
			&\qquad-d(C_{i-2}(x_{i-1}^k)-C_{i-2}(x_{i-2}^k))),\quad i=3,...,n-1,\\
			&x_n^{k} = J_{dA_n}(x_{1}^k+x_{n-1}^k-\widetilde{z}_{i-1}^k-dB_{n-1}(x_{n-1}^k)-d(C_{n-2}(x_{n-1}^k)-C_{n-2}(x_{n-2}^k)))\\
			&\widetilde{z}_i^{k+1}=\widetilde{z}_i^k+\widetilde{\gamma}(x_{i+1}^k-x_i^k),
			\quad i =1,\ldots,n-1,
		\end{aligned}
		\right.
	\end{equation*}
	which is exactly  \cite[Algorithm 3.1]{FBWR}.
	Due to $\gamma\in(0,1)$ and \eqref{ww}, we have $\widetilde \gamma\in(0,1-\Gamma)$ where  $\Gamma=d\max\{-\frac1d+ \frac1{2\sigma_1}+L_1, \frac1{4\sigma_1}+\frac1{4\sigma_2}+\frac{L_2}2+L_1, \max_{2\le i\le n-3}\{\frac1{4\sigma_i}+\frac1{4\sigma_{i+1}}+\frac{L_{i-1}}2+L_i+\frac{L_{i+1}}2\}, \frac1{4\sigma_{n-2}}+\frac1{4\sigma_{n-1}}+\frac{L_{n-3}}2+L_{n-2},-\frac1d+
	\frac1{2\sigma_{n-1}}+L_{n-2}\}$.
	Since $\overline d\ge \frac{2\underline{\sigma}}{1+4\underline{\sigma }\overline{L}}$ and $\Gamma\le\frac{d(1+4\underline{\sigma}\overline L)}{2\underline{\sigma}}$,  the ranges of $d$ and $\widetilde\gamma$ include
	those given  in \cite{FBWR}.  Therefore,  we can always  enlarge the range of $d$ or $\widetilde\gamma$ given in \cite{FBWR} by changing the order of operators when the values of $L_i$, $i=1,\ldots,n-2$ (or $\sigma_i$, $i=1,\ldots,n-1$) are not all equal.
	
	\stepcounter{specialsec}
	\subsection{The second realisation: Generalized parallel Davis--Yin method with reflected terms}\label{a2}
	We select  matrices as follows
	\begin{equation*}
		\begin{aligned}
			K=&
			\begin{pmatrix}
				\frac{2(n-1)}{d}&-\frac{2}{d}&-\frac{2}{d}&\cdots&-\frac{2}{d}&-\frac{2}{d}\\
				-\frac{2}{d}&\frac{2}{d}&0&\cdots&0&0\\
				-\frac{2}{d}&0&\frac{2}{d}&\cdots&0&0\\
				\vdots&\vdots&\vdots&\ddots&\vdots&\vdots\\
				-\frac{2}{d}&0&0&\cdots&\frac{2}{d}&0\\
				-\frac{2}{d}&0&0&\cdots&0&\frac{2}{d}\\
			\end{pmatrix}_{n\times n},
			M=\lambda
			\begin{pmatrix}
				1&1&\cdots&1&1\\
				-1&0&\cdots&0&0\\
				0&-1&\cdots&0&0\\
				\vdots&\vdots&\ddots&\vdots&\vdots\\
				0&0&\cdots&-1&0\\
				0&0&\cdots&0&-1\\
			\end{pmatrix}_{n\times (n-1)},\\
			P=&
			\begin{pmatrix}
				0&0&\cdots&0\\
				1&0&\cdots&0\\
				0&1&\cdots&0\\
				\vdots&\vdots&\ddots&\vdots\\
				0&0&\cdots&1\\
				0&0&\cdots&0\\
			\end{pmatrix}_{n\times (n-2)},
			Q=
			\begin{pmatrix}
				0&0&\cdots&0\\
				0&0&\cdots&0\\
				1&0&\cdots&0\\
				0&1&\cdots&0\\
				\vdots&\vdots&\ddots&\vdots\\
				0&0&\cdots&1\\
			\end{pmatrix}_{n\times (n-2)},
			R=
			\begin{pmatrix}
				1&0&\cdots&0&0&0\\
				1&0&\cdots&0&0&0\\
				\vdots&\vdots&\ddots&\vdots&\vdots&\vdots\\
				1&0&\cdots&0&0&0\\
			\end{pmatrix}_{(n-2)\times n}\\
			H=&
			\begin{pmatrix}
				0&0&\cdots&0\\
				1&0&\cdots&0\\
				0&1&\cdots&0\\
				\vdots&\vdots&\ddots&\vdots\\
				0&0&\cdots&1\\
			\end{pmatrix}_{n\times (n-1)},
			G=
			\begin{pmatrix}
				1&0&\cdots&0&0\\
				1&0&\cdots&0&0\\
				\vdots&\vdots&\ddots&\vdots&\vdots\\
				1&0&\cdots&0&0\\
			\end{pmatrix}_{(n-1)\times n},
		\end{aligned}
	\end{equation*}
	where $d,\lambda>0.$ It is easy to verify that
	$e_n^\top K e_n=0$ and  Assumption \ref{asm11}(a)\&(b) are satisfied. To guarantee the convergence, we require
	\begin{equation}\label{banfuding}
		\begin{aligned}
			0\preceq& K-MM^{\top}-\frac{1}{2}(H-G^{\top}){\Sigma^{-1}}(H^{\top}-G)\\
			&-
			(P-Q)L(P^\top-Q^\top)-(P-R^{\top})L(P^\top-R).
		\end{aligned}
	\end{equation}
	The right-hand side of (\ref{banfuding}) can be expressed as:
	\begin{equation*}\label{juzhen}
		\begin{aligned}
			&\begin{pmatrix}
				\frac{2(n-1)}{d}&-\frac{2}{d}&-\frac{2}{d}&\cdots&-\frac{2}{d}&-\frac{2}{d}\\
				-\frac{2}{d}&\frac{2}{d}&0&\cdots&0&0\\
				-\frac{2}{d}&0&\frac{2}{d}&\cdots&0&0\\
				\vdots&\vdots&\vdots&\ddots&\vdots&\vdots\\
				-\frac{2}{d}&0&0&\cdots&\frac{2}{d}&0\\
				-\frac{2}{d}&0&0&\cdots&0&\frac{2}{d}\\
			\end{pmatrix}
			-\lambda^2
			\begin{pmatrix}
				n-1&-1&-1&\cdots&-1&-1\\
				-1&1&0&\cdots&0&0\\
				-1&0&1&\cdots&0&0\\
				\vdots&\vdots&\vdots&\ddots&\vdots&\vdots\\
				-1&0&0&\cdots&1&0\\
				-1&0&0&\cdots&0&1\\
			\end{pmatrix}\\
			&-\frac{1}{2}				\begin{pmatrix}
				\sum_{i=1}^{n-1}\frac{1}{\sigma_i}&-\-\frac{1}{\sigma_1}&-\frac{1}{\sigma_2}&\cdots&-\frac{1}{\sigma_{n-2}}&-\frac{1}{\sigma_{n-1}}\\
				-\frac{1}{\sigma_1}&\frac{1}{\sigma_1}&0&\cdots&0&0\\
				-\frac{1}{\sigma_2}&0&\frac{1}{\sigma_2}&\cdots&0&0\\
				\vdots&\vdots&\vdots&\ddots&\vdots&\vdots\\
				-\frac{1}{\sigma_{n-2}}&0&0&\cdots&\frac{1}{\sigma_{n-2}}&0\\
				-\frac{1}{\sigma_{n-1}}&0&0&\cdots&0&\frac{1}{\sigma_{n-1}}\\
			\end{pmatrix}\\
			&-
			\begin{pmatrix}
				\sum_{i=1}^{n-2}L_i&-L_1&-L_2&\cdots&-L_{n-3}&-L_{n-2}&0\\
				-L_1&2L_1&-L_1&\cdots&0&0&0\\
				-L_2&-L_1&L_1+2L_2&\cdots&0&0&0\\
				\vdots&\vdots&\vdots&\vdots&\ddots&\vdots&\vdots\\
				-L_{n-3}&0&0&\cdots&L_{n-4}+2L_{n-3}&-L_{n-3}&0\\
				-L_{n-2}&0&0&\cdots&-L_{n-3}&L_{n-3}+2L_{n-2}&-L_{n-2}\\
				0&0&0&\cdots&0&-L_{n-2}&L_{n-2}\\
			\end{pmatrix}.
		\end{aligned}
	\end{equation*}
	To ensure that the above matrix is negative semidefinite, the following conditions need to be satisfied:
	\begin{equation*}\label{3.2}
		\left\{
		\begin{aligned}
			\frac{2(n-1)}{d}-\lambda^2(n-1)-\frac12\sum_{i=1}^{n-1}\frac{1}{\sigma_i}-\sum_{i=1}^{n-2}L_i&\geq0\\
			\frac{2}{d}-\lambda^2-\frac{1}{2\sigma_1}-2L_1&\geq0\\
			\frac{2}{d}-\lambda^2-\frac{1}{2\sigma_i}-L_{i-1}-2L_i&\geq0,\,\,i=2,\ldots,n-2\\
			\frac{2}{d}-\lambda^2-\frac{1}{2\sigma_{n-1}}-L_{n-2}&\geq0,
		\end{aligned}
		\right.
	\end{equation*}
	which equals
	\begin{equation}\label{3.21}
		\left\{
		\begin{aligned}
			2(n-1)-d\sum_{i=1}^{n-1}\frac{1}{2\sigma_i}-d\sum_{i=1}^{n-2}L_i&\geq(n-1)\lambda^2d\\
			2-\frac{d}{2\sigma_1}-2dL_1&\geq\lambda^2d\\
			2-\frac{d}{2\sigma_i}-dL_{i-1}-2dL_i&\geq\lambda^2d,\,\,i=2,\ldots,n-2\\
			2-\frac{d}{2\sigma_{n-1}}-dL_{n-2}&\geq\lambda^2d.\\
		\end{aligned}
		\right.
	\end{equation}
	Since $d>0,$ it follows from \eqref{3.21} that  $d<\overline d$ where $\bar d=\text{min}\{\frac{2(n-1)}{\sum_{i=1}^{n-1}\frac{1}{2\sigma_i}+\sum_{i=1}^{n-2}L_i},
	\frac{2}{\frac{1}{2\sigma_1}+2L_1},\\
	\min_{2\le i\le n-2}\frac{2}{\frac{1}{2\sigma_i}+L_{i-1}+2L_i},\frac{2}
	{\frac{1}{2\sigma_{n-1}}+L_{n-2}}\}.$
	It can be observed that $\overline d$ is larger when $C_i$ with the second-largest and the largest   $L_i$ are placed in the second and last evaluations or when $B_i$ with larger $\sigma_i$ and $C_i$ with larger $L_i$  are grouped together.
	
	Let $\widetilde{z}_i^k:=\lambda d z_i^k$, $i=1,\ldots,n-1$, $\gamma_k\equiv\gamma$ for $k\ge0$ and $\widetilde{\gamma}=\lambda^2d\gamma$, then Algorithm \ref{AlgG} becomes
	\begin{equation*}
		\label{AlgA2}
		\left\{
		\begin{aligned}
			&x_1^k=J_{\frac{d}{n-1}A_1}(\textstyle\frac{1}{n-1}\sum_{i=1}^{n-1}\widetilde{z}_i^k)\\
			&x_2^k=J_{dA_2}(2x_1^k-\widetilde{z}_1^k-dB_1(x_1^k)-dC_1(x_1^k))\\
			&x_i^{k} = J_{dA_i}(2x_1^k-\widetilde{z}_{i-1}^k-dB_{i-1}(x_{1}^k)-dC_{i-1}(x_{1}^k)-d(C_{i-2}(x_{i-1}^k)-C_{i-2}(x_{1}^k)))\\				&~~~~~~~~~~~~~~~~~~~~~~~~~~~~~~~~~~~~~~~~~~~~~~~~~~~~~~~~~~~~~~~~~~~~~~~~~~~~~~~~~~~~~~~~~i=3,...,n-1,\\
			&x_n^{k} = J_{dA_n}(2x_{1}^k-\widetilde{z}_{n-1}^k-dB_{n-1}(x_{1}^k)-d(C_{n-2}(x_{n-1}^k)-C_{n-2}(x_{1}^k)))\\
			&\widetilde{z}_i^{k+1}=\widetilde{z}_i^k+\widetilde{\gamma}(x_{i+1}^k-x_1^k),
			\quad i =1,\ldots,n-1,
		\end{aligned}
		\right.
	\end{equation*}
	which is exactly \cite[Algorithm 3.2]{FBWR}.
	Due to $\gamma\in(0,1)$ and \eqref{3.21}, we have $\widetilde \gamma\in(0,2-\Gamma)$ where  $\Gamma=d\max\{\frac 1{2(n-1)}\sum_{i=1}^{n-1}\frac{1}{\sigma_i}+\frac 1{n-1}\sum_{i=1}^{n-2}L_i,\frac{1}{2\sigma_1}+2L_1,\max_{2\le i\le n-2}\{\frac{1}{2\sigma_i}+L_{i-1}+2L_i\},
	\frac{1}{2\sigma_{n-1}}+L_{n-2}\}$.
	Since $\overline d\ge \frac{4\underline{\sigma}}{1+6\underline{\sigma }\overline{L}}$ and $\Gamma\le\frac{d(1+6\underline{\sigma}\overline L)}{2\underline{\sigma}}$, the ranges of $d$ and $\widetilde\gamma$  are larger than those  given in \cite{FBWR}.
	
	\stepcounter{specialsec}
	\subsection{The third realisation: Generalized sequential Davis--Yin method with reflected terms}	
	We select  matrices as follows
	\newpage
	\begin{equation*}
		\begin{aligned}
			K=&\begin{pmatrix}
				\frac{2}{d}&-\frac{2}{d}&0&\cdots&0&0\\
				-\frac{2}{d}&\frac{4}{d}&-\frac{2}{d}&\cdots&0&0\\
				0&-\frac{2}{d}&\frac{4}{d}&\cdots&0&0\\
				\vdots&\vdots&\vdots&\ddots&\vdots&\vdots\\
				0&0&0&\cdots&\frac{4}{d}&-\frac{2}{d}\\
				0&0&0&\cdots&-\frac{2}{d}&\frac{2}{d}\\
			\end{pmatrix}_{n\times n},
			M=\lambda
			\begin{pmatrix}
				1&0&\cdots&0&0\\
				-1&1&\cdots&0&0\\
				0&-1&\cdots&0&0\\
				\vdots&\vdots&\ddots&\vdots&\vdots\\
				0&0&\cdots&-1&1\\
				0&0&\cdots&0&-1\\
			\end{pmatrix}_{n\times (n-1)},\\
			P=&
			\begin{pmatrix}
				0&0&\cdots&0\\
				1&0&\cdots&0\\
				0&1&\cdots&0\\
				\vdots&\vdots&\ddots&\vdots\\
				0&0&\cdots&1\\
				0&0&\cdots&0\\
			\end{pmatrix}_{n\times (n-2)},
			Q=
			\begin{pmatrix}
				0&0&\cdots&0\\
				0&0&\cdots&0\\
				1&0&\cdots&0\\
				0&1&\cdots&0\\
				\vdots&\vdots&\ddots&\vdots\\
				0&0&\cdots&1\\
			\end{pmatrix}_{n\times (n-2)},
			R=
			\begin{pmatrix}
				1&0&\cdots&0&0&0\\
				0&1&\cdots&0&0&0\\
				\vdots&\vdots&\ddots&\vdots&\vdots&\vdots\\
				0&0&\cdots&1&0&0\\
			\end{pmatrix}_{(n-2)\times n}\\
			H=&
			\begin{pmatrix}
				0&0&\cdots&0\\
				1&0&\cdots&0\\
				0&1&\cdots&0\\
				\vdots&\vdots&\ddots&\vdots\\
				0&0&\cdots&1\\
			\end{pmatrix}_{n\times (n-1)},
			G=
			\begin{pmatrix}
				1&0&\cdots&0&0\\
				0&1&\cdots&0&0\\
				\vdots&\vdots&\ddots&\vdots&\vdots\\
				0&0&\cdots&1&0\\
			\end{pmatrix}_{(n-1)\times n},
		\end{aligned}
	\end{equation*}
	where $d,\lambda>0.$ It is easy to verify that
	$e_n^\top K e_n=0$ and  Assumption \ref{asm11}(a)\&(b) are satisfied. To ensure the convergence, we must have
	\begin{equation}\label{26}
		\begin{aligned}
			0\preceq& K-MM^{\top}-\frac{1}{2}(H-G^{\top}){\Sigma^{-1}}(H^{\top}-G)\\
			&-
			(P-Q)L(P^\top-Q^\top)-(P-R^{\top})L(P^\top-R).
		\end{aligned}
	\end{equation}
	The right-hand side of (\ref{26}) can be expressed as:
	{\footnotesize
		\begin{equation*}\label{27}
			\begin{aligned}
				&\begin{pmatrix}
					\frac{2}{d}&-\frac{2}{d}&0&\cdots&0&0\\
					-\frac{2}{d}&\frac{4}{d}&-\frac{2}{d}&\cdots&0&0\\
					0&-\frac{2}{d}&\frac{4}{d}&\cdots&0&0\\
					\vdots&\vdots&\vdots&\ddots&\vdots&\vdots\\
					0&0&0&\cdots&\frac{4}{d}&-\frac{2}{d}\\
					0&0&0&\cdots&-\frac{2}{d}&\frac{2}{d}\\
				\end{pmatrix}
				-\lambda^2
				\begin{pmatrix}
					1&-1&0&\cdots&0&0\\
					-1&2&-1&\cdots&0&0\\
					0&-1&2&\cdots&0&0\\
					\vdots&\vdots&\vdots&\ddots&\vdots&\vdots\\
					0&0&0&\cdots&2&-1\\
					0&0&0&\cdots&-1&1
				\end{pmatrix}\\
				&-\frac{1}{2}
				\begin{pmatrix}
					\frac{1}{\sigma_1} & -\frac{1}{\sigma_1} & 0 & \dots & 0 & 0 \\
					-\frac{1}{\sigma_1} & \frac{1}{\sigma_1}+\frac{1}{\sigma_2} & -\frac{1}{\sigma_2} & \dots & 0 & 0 \\
					0 & -\frac{1}{\sigma_2} & \frac{1}{\sigma_2}+\frac{1}{\sigma_3} & \dots & 0 & 0 \\
					\vdots & \vdots & \vdots & \ddots & \vdots & \vdots \\
					0 & 0 & 0 & \dots & \frac{1}{\sigma_{n-2}}+\frac{1}{\sigma_{n-1}} & -\frac{1}{\sigma_{n-1}} \\
					0 & 0 & 0 & \dots & -\frac{1}{\sigma_{n-1}} & \frac{1}{\sigma_{n-1}}
				\end{pmatrix}\\
				&-
				\begin{pmatrix}
					L_1&-L_1&0&\cdots&0&0&0\\
					-L_1&2L_1+L_2&-L_1-L_2&\cdots&0&0&0\\
					0&-L_1-L_2&L_1+2L_2+L_3&\cdots&0&0&0\\
					\vdots&\vdots&\vdots&\ddots&\vdots&\vdots&\vdots\\
					0&0&0&\cdots&L_{n-4}+2L_{n-3}+L_{n-2}&-L_{n-3}-L_{n-2}&0\\
					0&0&0&\cdots&-L_{n-3}-L_{n-2}&L_{n-3}+2L_{n-2}&-L_{n-2}\\
					0&0&0&\cdots&0&-L_{n-2}&L_{n-2}\\
				\end{pmatrix}.
			\end{aligned}
		\end{equation*}
	}
	To ensure that the above matrix is negative semidefinite, the following conditions need to be satisfied:
	\begin{equation*}\label{28}
		\left\{
		\begin{aligned}
			\frac{2}{d}-\lambda^2-\frac{1}{2\sigma_1}-L_1&\geq0\\
			\frac{4}{d}-2\lambda^2-\frac{1}{2\sigma_1}-\frac{1}{2\sigma_{2}}-L_{2}-2L_{1}&\geq0\\
			\frac{4}{d}-2\lambda^2-\frac{1}{2\sigma_i}-\frac{1}{2\sigma_{i+1}}-L_{i-1}-2L_{i}-L_{i+1}&\geq0~i=2,...,n-3,\\
			\frac{4}{d}-2\lambda^2-\frac{1}{2\sigma_{n-2}}-\frac{1}{2\sigma_{n-1}}-L_{n-3}-2L_{n-2}&\geq0\\
			\frac{2}{d}-\lambda^2-\frac{1}{2\sigma_{n-1}}-L_{n-2}&\geq0,	\end{aligned}
		\right.
	\end{equation*}
	which can be rewritten as
	\begin{equation}\label{29}
		\left\{
		\begin{aligned}
			2-\frac{d}{2\sigma_1}-dL_1&\geq\lambda^2d\\
			2-\frac{d}{4\sigma_1}-\frac{d}{4\sigma_{2}}-\frac{dL_{2}}{2}-dL_{1}&\geq\lambda^2d\\
			2-\frac{d}{4\sigma_i}-\frac{d}{4\sigma_{i+1}}-\frac{dL_{i-1}}{2}-L_i-\frac{dL_{i+1}}{2}&\geq\lambda^2d,~~~~i=2,...,n-3,\\
			2-\frac{d}{4\sigma_{n-2}}-\frac{d}{4\sigma_{n-1}}-\frac{dL_{n-3}}{2}-dL_{n-2}&\geq\lambda^2d\\
			2-\frac{d}{2\sigma_{n-1}}-dL_{n-2}&\geq\lambda^2d.\\
		\end{aligned}
		\right.
	\end{equation}
	Since $d>0,$ it follows from \eqref{29} that  $d<\overline d$ where $\overline d=\min\{\frac{2}{\frac{1}{2\sigma_{1}}+L_1}, \frac{2}{\frac{1}{4\sigma_1}
		+\frac{1}{4\sigma_2}+\frac{L_2}{2}+L_1},\\
	\min_{2\le i\le n-3}\frac{2}{\frac{1}{4\sigma_i}+\frac{1}{4\sigma_{i+1}}+\frac{L_{i-1}}{2}
		+L_i+\frac{L_{i+1}}{2}},$$\frac{2}{\frac{1}{4\sigma_{n-2}}+\frac{1}{4\sigma_{n-1}}+
		\frac{L_{n-3}}{2}+L_{n-2}},\frac{2}{\frac{1}{2\sigma_{n-1}}+L_{n-2}}\}.$
	It can be observed that $\overline d$ is larger when $C_i$ with larger $L_i$ are placed in the first or last evaluations or when $B_i$ with larger $\sigma_i$ and $C_i$ with larger $L_i$  are grouped together.
	
	Let $\widetilde{z}_i^k:=\lambda d z_i^k$, $i=1,\ldots,n-1$, $\gamma_k\equiv\gamma$ for $k\ge0$ and $\widetilde{\gamma}=\lambda^2d\gamma$, then Algorithm \ref{AlgG} becomes
	\begin{equation*}\label{AlgA3}
		\left\{
		\begin{aligned}
			&x_1^k=J_{dA_1}(\widetilde{z}_1^k)\\
			&x_2^k=J_{\frac{d}{2}A_2}(x_1^k+\frac{\widetilde{z}_2^k-\widetilde{z}_1^k}{2}-\frac{d}{2}B_1(x_1^k)-\frac{d}{2}C_1(x_1^k))\\
			&x_i^{k} = J_{\frac{d}{2}A_i}(x_{i-1}^k+\frac{\widetilde{z}_i^k-\widetilde{z}_{i-1}^k}{2}-\frac{d}{2}B_{i-1}(x_{i-1}^k)-\frac{d}{2}C_{i-1}(x_{i-1}^k)-\frac{d}{2}(C_{i-2}(x_{i-1}^k)-C_{i-2}(x_{i-2}^k)))\\
			&~~~~~~~~~~~~~~~~~~~~~~~~~~~~~~~~~~~~~~~~~~~~~~~~~~~~~~~~~~~~~~~~~~~~~~~~~~~~~~~~~~~~~~~~~i=3,...,n-1,\\
			&x_n^{k} = J_{dA_n}(2x_{1}^k-\widetilde{z}_{i-1}^k-dB_{n-1}(x_{n-1}^k)-d(C_{n-2}(x_{n-1}^k)-C_{n-2}(x_{n-2}^k)))\\
			&\widetilde{z}_i^{k+1}=\widetilde{z}_i^k+\widetilde\gamma(x_{i+1}^k-x_i^k),
			\quad i =1,\ldots,n-1,
		\end{aligned}
		\right.
	\end{equation*}
	which is exactly \cite[Algorithm 3.3]{FBWR}.
	Due to $\gamma\in(0,1)$ and \eqref{29}, we have $\widetilde \gamma\in(0,2-\Gamma)$ where  $\Gamma=d\max\{ \frac1{2\sigma_1}+L_1, \frac1{4\sigma_1}+\frac1{4\sigma_2}+\frac{L_2}2+L_1, \max_{2\le i\le n-3}\{\frac1{4\sigma_i}+\frac1{4\sigma_{i+1}}+\frac{L_{i-1}}2+L_i+\frac{L_{i+1}}2\}, \frac1{4\sigma_{n-2}}+\frac1{4\sigma_{n-1}}+\frac{L_{n-3}}2+L_{n-2}, \frac1{2\sigma_{n-1}}+L_{n-2})\}$.
	Since $\overline d \ge \frac{4\underline{\sigma}}{1+4\underline{\sigma }\overline{L}}$ and $\Gamma\le \frac{d(1+4\underline{\sigma}\overline L)}{2\underline{\sigma}}$,  the ranges of $d$ and $\widetilde\gamma$ include those in \cite{FBWR}.  Therefore,  we can always  enlarge the range of $d$ or $\widetilde\gamma$ given in \cite{FBWR} by changing the order of operators when the values of $L_i$, $i=1,\ldots,n-2$ (or $\sigma_i$, $i=1,\ldots,n-1$) are not all equal.
	
	\newpage

\end{document}